\theoremstyle{plain}\newtheorem{Theorem}{Theorem}[section]
\theoremstyle{plain}
\theoremstyle{plain}\newtheorem{Corollary}[Theorem]{Corollary}
\theoremstyle{plain}\newtheorem{Lemma}[Theorem]{Lemma}
\theoremstyle{plain}\newtheorem{Proposition}[Theorem]{Proposition}
\theoremstyle{definition}
\theoremstyle{definition}\newtheorem{Example}[Theorem]{Example}
\theoremstyle{definition}
\theoremstyle{definition}\newtheorem{Remark}[Theorem]{Remark}
\theoremstyle{definition}
    \def\OG{{\mathcal{O}G}}  \def\OGb{{\mathcal{O}Gb}}
    \def\OH{{\mathcal{O}H}}  
    \def\OP{{\mathcal{O}P}}
    \def\ON{{\mathcal{O}N}}
\def\CO{{\mathcal{O}}}
\def\Z{{\mathbb Z}}
\def\ann{\mathrm{ann}}
             \def\ten{\otimes}
\def\dim{\mathrm{dim}}           
\def\End{\mathrm{End}}
\def\Hom{\mathrm{Hom}}           
\def\ker{\mathrm{ker}}           
\def\IBr{\mathrm{IBr}}         
\def\Id{\mathrm{Id}}             \def\tenA{\otimes_A}
\def\Im{\mathrm{Im}}             \def\tenB{\otimes_B}
\def\Ind{\mathrm{Ind}}
\def\Irr{\mathrm{Irr}}           
           \def\tenO{\otimes_{\mathcal{O}}}
\def\rank{\mathrm{rank}}         
\def\Res{\mathrm{Res}}
\def\tr{\mathrm{tr}}
\title{On symmetric quotients of symmetric algebras} 
\author{Radha Kessar} 
\address{City University London, Department of Mathematical Science,
         Northampton Square, London, EC1V 0HB, United Kingdom
        }
\author{Shigeo Koshitani} 
\address{ Department of Mathematics, Graduate School of Science,
          Chiba University, 1-33 Yayoi-cho, Inage-ku, Chiba,
          263-8522, Japan
        }
\author{Markus Linckelmann} 
\address{City University London, Department of Mathematical Science,
         Northampton Square, London, EC1V 0HB, United Kingdom
        }
\begin{document}
\thanks{A part of this work was done while the second author was visiting the other authors in March and October 2013.
The second author gratefully acknowledges support  for these visits  from  the Japan Society for Promotion of Science (JSPS), Grant-in-Aid for Scientific Research (C)23540007, 2011--2014.}

\begin{abstract}
We investigate symmetric quotient algebras of symmetric algebras,
with an emphasis on finite group algebras over a complete discrete 
valuation ring $\CO$. Using elementary methods, we show that if an
ordinary irreducible character $\chi$ of a finite group $G$ gives 
rise to a symmetric quotient over $\CO$ which is not a matrix algebra, 
then the decomposition numbers of the row labelled by $\chi$ are
all divisible by the characteristic $p$ of the residue field of $\CO$.
\end{abstract}

\maketitle

%%%%%%%%%%%%%%%%%%%%%%%%%%%%%%%%%%%%%%%%
\section{Introduction}

Let $p$ be a prime and $\CO$ a complete discrete valuation ring
having a residue field $k$ of characteristic $p$ and a quotient
field $K$ of characteristic zero. Unless stated otherwise, we
assume that $K$ and $k$ are splitting fields for all finite groups
under consideration. Let $G$ be a finite group.
Any subset $M$ of the set $\Irr_K(G)$ of irreducible $K$-valued 
characters of $G$ gives rise to an $\CO$-free quotient algebra, 
namely the image of a structural homomorphism $\OG\to$ $\End_\CO(V)$, 
where $V$ is an $\CO$-free $\OG$-module having character 
$\sum_{\chi\in M} \chi$. This
image is isomorphic to $\OG(\sum_{\chi\in M} e(\chi))$, where
$e(\chi)$ denotes the primitive idempotent in $Z(KG)$ corresponding
to $\chi$. Any $\CO$-free quotient algebra of $\OG$ arises in this
way; in particular, $\OG$ has only finitely many $\CO$-free
algebra quotients. Any quotient of $\OG$ admits a decomposition
induced by the block decomposition, and hence finding symmetric
quotients of $\OG$ is equivalent to finding symmetric quotients
of the block algebras of $\OG$. We denote by $\IBr_k(G)$ the set
of irreducible Brauer characters of $G$, and by
$d_G : \Z\Irr_K(G)\to$ $\Z\IBr_k(G)$ the
decomposition map, sending a generalised character of $G$
to its restriction to the set $G_{p'}$ of $p'$-elements in $G$. 
For $B$ a block algebra of $\OG$,
we denote by $\Irr_K(B)$ and $\IBr_k(B)$ the sets of irreducible
$K$-characters and Brauer characters, respectively, associated with
$B$. We denote by $d_B : \Z\Irr_K(B)\to$ $\Z\IBr_k(B)$ the
decomposition map obtained from restricting $d_G$.
We denote by $D_G=$ $(d^\chi_\varphi)$ the decomposition matrix
of $\OG$, with rows indexed by $\chi\in$ $\Irr_K(G)$ and columns
indexed by $\varphi\in\IBr_k(G)$; that is, the $d^\chi_\varphi$ are the 
nonnegative integers satisfying $d_G(\chi)=$
$\sum_{\varphi\in\IBr_k(G)}\ d^\chi_\varphi\cdot \varphi$, for
any $\chi\in$ $\Irr_K(G)$. Equivalently, $d^\chi_\varphi=$ $\chi(i)$,
where $i$ is a primitive idempotent in $\OG$ such that $\OG i$ is
a projective cover of a simple $kG$-module with Brauer character
$\varphi$. If $\chi$, $\varphi$ belong to 
different blocks, then $d^\chi_\varphi=$ $0$. For $B$ a block
algebra of $\OG$, we denote by $D_B$ the submatrix of $D_G$
labelled by $\chi\in$ $\Irr_K(B)$ and $\varphi\in$ $\IBr_k(B)$.
We say that $\chi\in$ $\Irr_K(G)$ lifts
the irreducible Brauer character $\varphi\in$ $\IBr_k(G)$ if
$d_G(\chi)=$ $\varphi$, or equivalently, if $d^\chi_\varphi=$ $1$
and $d^\chi_{\varphi'}=0$ for all $\varphi'\in$ $\IBr_k(G)$ 
different from $\varphi$. In that case, $\OG e(\chi)$ is a matrix
algebra over $\CO$, hence trivially symmetric (see 
\ref{OGchimatrix} below).

\begin{Theorem} \label{decnumberp}
Let $G$ be a finite group and $\chi\in$ $\Irr_K(G)$.
Suppose that $\OG e(\chi)$ is symmetric. Then either
$\chi$ lifts an irreducible Brauer character, or $d^\chi_\varphi$
is divisible by $p$ for all $\varphi\in$ $\IBr_k(G)$.
\end{Theorem}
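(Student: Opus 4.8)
The plan is to realise $A := \OG e(\chi)$ as a full $\CO$-order in the matrix algebra $M_n(K)$, where $n = \chi(1)$ and $M_n(K) = KGe(\chi) = \End_K(K\otimes_\CO V)$ for $V$ an $\CO$-free $\OG$-module affording $\chi$. The structural map $\OG\to\End_\CO(V)$ identifies $A$ with a unital subring of $\End_\CO(V)\cong M_n(\CO)$, and $\tr(A)\subseteq\CO$ because $\tr(ge(\chi)) = \chi(g)$ is an algebraic integer for each $g\in G$, where $\tr$ denotes the reduced trace of $M_n(K)$ (i.e.\ the trace on $\End_K(K\otimes_\CO V)$). I would first record the equivalence: $\chi$ lifts an irreducible Brauer character if and only if $A = \End_\CO(V)$. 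The forward direction is the Nakayama argument already alluded to in the excerpt --- if $k\otimes_\CO V$ is a simple $kG$-module then $\OG\to\End_\CO(V)$ is surjective; the converse follows by reducing $A = M_n(\CO)$ modulo the maximal ideal $\mathfrak{m}$ of $\CO$ to obtain $\bar A = \End_k(k\otimes_\CO V) = M_n(k)$, which forces the $n$-dimensional $kG$-module $k\otimes_\CO V$ (of Brauer character $d_G(\chi)$) to be simple. So from now on I may assume $A\subsetneq\End_\CO(V)$, and the task is to prove $p\mid d^\chi_\varphi$ for every $\varphi\in\IBr_k(G)$.

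Next I would analyse the symmetrizing form. Let $\lambda : A\to\CO$ be one. Extending scalars, $K\lambda : M_n(K)\to K$ is a symmetrizing form on the split simple algebra $M_n(K)$; since any such form vanishes on commutators it factors through the reduced trace, so $K\lambda = c\cdot\tr$ for a unique $c\in K^\times$, and hence $\lambda(a) = c\cdot\tr(a)$ for all $a\in A$. The crucial point is that $v(c)\le -1$, where $v$ is the normalised valuation of $\CO$. Indeed, non-degeneracy of $\lambda$ together with the perfect trace pairing on $M_n(K)$ yields $cA = A^\vee$, where $A^\vee := \{\,x\in M_n(K) : \tr(xA)\subseteq\CO\,\}$; from $\tr(A)\subseteq\CO$ we get $A\subseteq A^\vee$, so $c^{-1}\in\CO$; and if $c$ were a unit then $A = A^\vee$, which, since $A\subseteq\End_\CO(V) = \End_\CO(V)^\vee\subseteq A^\vee$ (the maximal order being self-dual under $\tr$), would give $A = \End_\CO(V)$, contrary to our assumption. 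Hence $c^{-1}\in\mathfrak{m}$, i.e.\ $v(c)\le-1$.

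To finish, fix $\varphi\in\IBr_k(G)$. If $d^\chi_\varphi = 0$ there is nothing to prove, so assume $d^\chi_\varphi\ne 0$ and choose a primitive idempotent $j\in\OG$ such that $\OG j$ is a projective cover of a simple $kG$-module with Brauer character $\varphi$; by the formula recalled in the introduction, $d^\chi_\varphi = \chi(j)$. Put $e := je(\chi)\in A$, an idempotent; since $e(\chi)$ acts as the identity on $K\otimes_\CO V$, the element $e$ acts there exactly as $j$ does, so $\tr(e) = \chi(j) = d^\chi_\varphi$. Therefore $\CO\ni\lambda(e) = c\cdot\tr(e) = c\cdot d^\chi_\varphi$, and applying $v$ gives $v(d^\chi_\varphi) = v(c^{-1}) + v(\lambda(e))\ge 1$. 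Thus $d^\chi_\varphi\in\mathfrak{m}\cap\Z = p\Z$, i.e.\ $p\mid d^\chi_\varphi$.

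The step I expect to be the crux is pinning down the valuation of $c$: mere integrality of $\lambda$ only gives the weak condition $c\cdot\tr(A)\subseteq\CO$, so one must genuinely use that $\lambda$ is a non-degenerate pairing, together with the self-duality $\End_\CO(V)^\vee = \End_\CO(V)$ of the maximal order under the reduced trace, to see that $v(c) < 0$ holds precisely when $A\ne\End_\CO(V)$, i.e.\ precisely when $\chi$ does not lift. The remaining ingredients --- the identification $\tr(e) = d^\chi_\varphi$ via $d^\chi_\varphi = \chi(j)$, and the passage from divisibility by $\mathfrak{m}$ in $\CO$ to divisibility by $p$ in $\Z$ --- are routine.
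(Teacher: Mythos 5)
Your proposal is correct and is essentially the paper's own argument (cf. \ref{OGchimatrix}, \ref{EndOVsubalgebras}, \ref{EndOVidempotents} and \ref{Adecnumberp}): identify $\OG e(\chi)$ with a full-rank subalgebra $A$ of $\End_\CO(V)$, show that any symmetrising form on $A$ is $c\cdot\tr_V$ with $c$ of strictly negative valuation whenever $A$ is proper, and evaluate it at idempotents such as $je(\chi)$, whose traces are exactly the decomposition numbers. Your justification of the valuation bound via self-duality of the maximal order $\End_\CO(V)$ under the trace pairing is only a minor variant of the paper's dual-basis computation of the minimal $r$ with $\pi^r\End_\CO(V)\subseteq A$, so the substance of the two proofs is the same.
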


This is a special case of the slightly more general 
result \ref{Adecnumberp} below. We note some immediate
consequences.      

\begin{Corollary} \label{blockdecpprime}
Let $G$ be a finite group and $\chi\in$ $\Irr_K(G)$ such that 
$d^\chi_\varphi$ is prime to $p$ for some $\varphi\in$ $\IBr_k(G)$. 
Then the $\CO$-algebra $\OG e(\chi)$ is symmetric if and only
if $\chi$ lifts $\varphi$.
\end{Corollary}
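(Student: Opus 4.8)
The plan is to derive this as a purely formal consequence of Theorem~\ref{decnumberp}, together with the fact recalled in the introduction that if $\chi$ lifts an irreducible Brauer character then $\OG e(\chi)$ is a matrix algebra over $\CO$, hence symmetric (this is \ref{OGchimatrix}). That fact already settles the implication ``$\chi$ lifts $\varphi$ $\Rightarrow$ $\OG e(\chi)$ is symmetric'', so all the content lies in the reverse implication.

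For the reverse implication, I would assume that $\OG e(\chi)$ is symmetric and apply Theorem~\ref{decnumberp}. It produces one of two alternatives: either $\chi$ lifts some irreducible Brauer character, or $d^\chi_{\varphi'}$ is divisible by $p$ for every $\varphi'\in\IBr_k(G)$. The second alternative directly contradicts the standing hypothesis that $d^\chi_\varphi$ is prime to $p$, so the first alternative must hold: $\chi$ lifts some $\varphi'\in\IBr_k(G)$.

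It then only remains to identify $\varphi'$ with $\varphi$. By the definition of lifting, $d^\chi_{\varphi'}=1$ while $d^\chi_{\varphi''}=0$ for every $\varphi''\in\IBr_k(G)$ distinct from $\varphi'$. Since $d^\chi_\varphi$ is prime to $p$, it is in particular nonzero, which forces $\varphi=\varphi'$; hence $\chi$ lifts $\varphi$, as claimed.

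I do not expect any genuine obstacle here beyond the appeal to Theorem~\ref{decnumberp}; the only step requiring a moment's care is the final one, where one must exclude the a priori possibility that $\chi$ lifts a Brauer character other than $\varphi$ — but this is immediate from the hypothesis $d^\chi_\varphi\neq 0$, since a character that lifts $\varphi'$ has vanishing decomposition numbers against all Brauer characters other than $\varphi'$.
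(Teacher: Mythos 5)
Your proposal is correct and is exactly the argument the paper intends: the corollary is stated as an immediate consequence of Theorem~\ref{decnumberp}, with the converse direction supplied by \ref{OGchimatrix} (lifting implies matrix algebra, hence symmetric). Your final step identifying $\varphi'$ with $\varphi$ via $d^\chi_\varphi\neq 0$ is the right (and only) point needing a remark, and it is handled correctly.
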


By a result of Dade \cite{Dadecyclic}, all decomposition numbers of 
blocks with cyclic defect are either $0$ or $1$. 

\begin{Corollary} \label{cyclicdefectCor}
Let $G$ be a finite group and $B$ a block with cyclic defect groups.
Let $\chi\in$ $\Irr_K(B)$. Then $\OG e(\chi)$ is symmetric
if and only if $\chi$ corresponds to a nonexceptional vertex at the 
end of a branch of the Brauer tree of $B$.
\end{Corollary}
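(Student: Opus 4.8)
The plan is to combine the previous corollary (Corollary \ref{blockdecpprime}) with Dade's theorem on cyclic defect blocks and the combinatorial structure of the Brauer tree. Since all decomposition numbers of a block $B$ with cyclic defect groups are $0$ or $1$, every nonzero decomposition number $d^\chi_\varphi$ is automatically prime to $p$; consequently Corollary \ref{blockdecpprime} applies to every pair $(\chi,\varphi)$ with $d^\chi_\varphi\neq 0$, and it tells us that $\OG e(\chi)$ is symmetric if and only if $\chi$ lifts $\varphi$ — that is, if and only if the row of $D_B$ labelled by $\chi$ has exactly one nonzero entry.

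First I would recall the description of the decomposition matrix of a cyclic defect block in terms of its Brauer tree: the rows of $D_B$ are indexed by $\Irr_K(B)$, the columns by $\IBr_k(B)$, the vertices of the tree correspond to the $\chi\in\Irr_K(B)$ (with one distinguished \emph{exceptional} vertex, which may carry multiplicity if the inertial index is larger than $1$), the edges correspond to the $\varphi\in\IBr_k(B)$, and $d^\chi_\varphi=1$ precisely when the vertex $\chi$ is an endpoint of the edge $\varphi$ (with the usual convention for the exceptional vertex, where each of its associated characters meets the edges incident to that vertex). Hence the number of nonzero entries in the row labelled by a nonexceptional vertex $\chi$ equals the number of edges of the tree incident to that vertex, i.e. the degree of $\chi$ as a vertex of the Brauer tree; for a character associated with the exceptional vertex, the row has at least as many nonzero entries as the degree of the exceptional vertex, which is $\geq 1$, and if that vertex is exceptional with multiplicity $> 1$ one should check the row still has the degree-many ones.

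Putting these together: $\OG e(\chi)$ is symmetric $\iff$ the $\chi$-row of $D_B$ has a single nonzero entry $\iff$ $\chi$ is a nonexceptional vertex of degree $1$, i.e. a nonexceptional leaf (a nonexceptional vertex at the end of a branch of the Brauer tree). One also has to rule out the exceptional vertex even when it is a leaf: a leaf that is the exceptional vertex still has a single incident edge, but if the multiplicity $m$ of the exceptional vertex exceeds $1$ the relevant $e(\chi)$ is the sum of the block idempotents of the $m$ characters in the exceptional class, so $\OG e(\chi)$ is a direct product of $m$ matrix algebras over $\CO$, which is still symmetric — so I need to be slightly careful about what ``$\chi$'' means here. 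In the standard setup $\chi$ ranges over genuine irreducible characters, and the $m$ characters in the exceptional class all restrict to the same Brauer character (their common restriction being $m$ times a sum of $\varphi$'s along the incident edges, or rather each individually maps to the sum of incident edges), so an exceptional character sitting at a leaf has a single nonzero decomposition number equal to $1$ and hence does lift an irreducible Brauer character; thus it should be \emph{included}. This is the one subtlety to get right, and the cleanest fix is to phrase the corollary (as the authors do) in terms of ``nonexceptional vertex at the end of a branch,'' which I would then justify by noting that when the exceptional multiplicity is $1$ there is no genuinely exceptional vertex anyway, and when it is $> 1$ the exceptional characters at a leaf still have row-sum-of-squares equal to $1$, hence still lift and hence are not the obstruction — so the statement as given is the correct one provided ``nonexceptional'' is read in the standard Brauer-tree sense. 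The main obstacle, then, is not any deep argument but precisely this bookkeeping around the exceptional vertex and its multiplicity; once that convention is pinned down, the proof is a one-line deduction from Corollary \ref{blockdecpprime} and Dade's theorem.
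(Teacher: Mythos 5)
Your main line is exactly the one the paper intends: Dade's theorem gives $0$--$1$ decomposition numbers, so every row of $D_B$ has an entry prime to $p$; Corollary \ref{blockdecpprime} then reduces symmetry of $\OG e(\chi)$ to ``$\chi$ lifts an irreducible Brauer character''; and the Brauer-tree description of $D_B$ translates this into ``the $\chi$-row has a single nonzero entry'', i.e.\ the vertex carrying $\chi$ is at the end of a branch. The paper offers nothing beyond this reduction, so up to that point you match its proof.

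The difficulty is your handling of the exceptional vertex, which ends in a contradiction rather than a proof of the statement as phrased. You correctly observe that when the exceptional vertex is a leaf, each of the $m$ exceptional characters has a single decomposition number, equal to $1$, hence lifts an irreducible Brauer character and gives a symmetric (matrix algebra) quotient; but you then assert that the corollary ``as given is the correct one'', i.e.\ that symmetry forces $\chi$ to sit at a \emph{nonexceptional} leaf. These two assertions are incompatible whenever the exceptional vertex is a leaf and $m>1$ --- which does happen, e.g.\ for any nilpotent block with cyclic defect groups, in particular for $G$ cyclic of order $p$ with $p$ odd, where the tree is a single edge, every nontrivial $\chi$ is exceptional, and yet $\OG e(\chi)\cong\CO$ is symmetric. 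So what your argument actually establishes is: $\OG e(\chi)$ is symmetric if and only if the vertex to which $\chi$ is attached is at the end of a branch; the qualifier ``nonexceptional'' does not follow from your (or the paper's) reduction, and appealing to ``the standard Brauer-tree sense'' does not repair the only-if direction. You need either to prove the leaf criterion and say explicitly how it relates to the stated formulation, or to pin down a convention on the exceptional vertex under which the stated formulation is literally true; your current paragraph does neither. One further small slip: since $K$ is a splitting field, $e(\chi)$ for an exceptional character is a single primitive idempotent of $Z(KG)$, not the sum of the idempotents over the exceptional family --- your later sentences treat it correctly, but the earlier claim should be removed.
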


By Erdmann's results in \cite{Erdmanntame},  every row of the 
decomposition  matrix of a  nonnilpotent tame block  contains   
an  entry  equal to $1$.

\begin{Corollary} \label{tameblockCor}
Suppose that $p=$ $2$. Let $G$ be a finite group and $B$ a
nonnilpotent block of $\OG$ having a defect group $P$ which 
is either  dihedral,   generalised quaternion, or semidihedral.
Then for any $\chi\in$ $\Irr_K(B)$,
the algebra $\OG e(\chi)$ is symmetric if and only if
$\chi$ lifts an irreducible Brauer character in $\IBr_k(B)$.
\end{Corollary}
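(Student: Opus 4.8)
The plan is to deduce Corollary~\ref{tameblockCor} from Corollary~\ref{blockdecpprime} together with the cited structural results of Erdmann on tame blocks. Since $p=2$, the hypothesis that $P$ is dihedral, generalised quaternion, or semidihedral, and that $B$ is nonnilpotent, is precisely the situation in which Erdmann's classification in \cite{Erdmanntame} applies; the relevant consequence is that \emph{every} row of the decomposition matrix $D_B$ contains an entry equal to $1$. Fix $\chi\in\Irr_K(B)$. By Erdmann's result there is some $\varphi\in\IBr_k(B)$ with $d^\chi_\varphi=1$, in particular $d^\chi_\varphi$ is prime to $p=2$. Now Corollary~\ref{blockdecpprime} applies verbatim: $\OG e(\chi)$ is symmetric if and only if $\chi$ lifts $\varphi$, i.e. if and only if $\chi$ lifts an irreducible Brauer character in $\IBr_k(B)$.

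For the converse direction one only needs the elementary observation already recorded in the introduction (the statement referred to as \ref{OGchimatrix}): if $\chi$ lifts some $\varphi\in\IBr_k(G)$, then $\OG e(\chi)$ is a matrix algebra over $\CO$, hence symmetric. This does not require the tame hypothesis at all. Combining the two directions gives the stated equivalence. One should note that if $\chi$ lifts an irreducible Brauer character then that character automatically lies in $\IBr_k(B)$, since $\chi\in\Irr_K(B)$ and $d^\chi_\varphi=0$ whenever $\chi$ and $\varphi$ lie in different blocks; so the phrasing ``in $\IBr_k(B)$'' is consistent.

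The only genuine content beyond bookkeeping is the invocation of Erdmann's theorem, and the main point to get right is that its hypotheses match ours: nonnilpotency of $B$ is exactly what rules out the degenerate Brauer-tree-like situation and guarantees a $1$ in every row, while the three families of $2$-groups listed are exactly the tame defect groups. I expect no real obstacle here — the proof is a two-line citation argument. If one wanted to be fully self-contained one would have to reproduce the relevant part of Erdmann's analysis of the decomposition matrices (using the Brauer tree for the case of a Klein four defect group and the explicit matrices for dihedral, semidihedral and quaternion defect in the other cases), but for the purposes of this paper it is legitimate to quote it.

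\begin{proof}[Proof of Corollary~\ref{tameblockCor}]
If $\chi$ lifts an irreducible Brauer character, then $\OG e(\chi)$ is a matrix algebra over $\CO$ by \ref{OGchimatrix}, hence symmetric. Conversely, suppose $\OG e(\chi)$ is symmetric. Since $p=2$ and $B$ is a nonnilpotent block with dihedral, generalised quaternion, or semidihedral defect group $P$, Erdmann's results in \cite{Erdmanntame} show that the row of $D_B$ labelled by $\chi$ contains an entry equal to $1$; that is, $d^\chi_\varphi=1$ for some $\varphi\in\IBr_k(B)$. In particular $d^\chi_\varphi$ is prime to $p$, so Corollary~\ref{blockdecpprime} applies and yields that $\chi$ lifts $\varphi$. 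Since $\chi\in\Irr_K(B)$, any irreducible Brauer character lifted by $\chi$ lies in $\IBr_k(B)$, so $\chi$ lifts an irreducible Brauer character in $\IBr_k(B)$.
\end{proof}
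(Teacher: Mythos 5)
Your proof is correct and is essentially the argument the paper intends: Corollary~\ref{tameblockCor} is stated there as an immediate consequence of Corollary~\ref{blockdecpprime} together with Erdmann's result that every row of the decomposition matrix of a nonnilpotent tame block contains an entry equal to $1$, with the easy direction supplied by \ref{OGchimatrix}. Your only deviation is a harmless redundancy, since \ref{blockdecpprime} already gives both implications once an entry prime to $p$ is found in the row of $\chi$.
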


Many blocks of quasi-simple groups also seem to have the property that   
every row has an entry prime to $p$. 
Any row of the decomposition matrix of a finite group corresponding 
to a height zero character has at least one entry which is prime to $p$.  
Hence \ref{blockdecpprime} yields the following.

\begin{Corollary} \label{blockheightzero}
Let $G$ be a finite group, $B$ a block of $\OG$,  and $\chi\in$ 
$\Irr_K(B)$. Suppose that $\chi$ has height zero. 
Then the $\CO$-algebra $\OG e(\chi)$ is symmetric if and only
if $\chi$ lifts an irreducible Brauer character.
\end{Corollary}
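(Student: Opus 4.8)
The plan is to reduce the statement directly to Corollary~\ref{blockdecpprime}; the only point requiring an argument is that the row of the decomposition matrix indexed by a height zero character $\chi\in\Irr_K(B)$ contains an entry prime to $p$.

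To see this, I would fix a defect group $D$ of $B$ and write $p^a=|G|_p$ and $p^d=|D|$, so that $\chi$ having height zero means precisely $\chi(1)_p=p^{a-d}$. I would then combine this with the classical fact that every $\varphi\in\IBr_k(B)$ satisfies $\varphi(1)_p\geq p^{a-d}$ (equivalently, the defect of $\varphi$ does not exceed the defect of $B$). Evaluating the identity $d_B(\chi)=\sum_{\varphi\in\IBr_k(B)} d^\chi_\varphi\,\varphi$ at $1\in G_{p'}$ gives $\chi(1)=\sum_{\varphi} d^\chi_\varphi\,\varphi(1)$. If $p$ divided every $d^\chi_\varphi$, the right-hand side would be divisible by $p^{a-d+1}$, contradicting $\chi(1)_p=p^{a-d}$. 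Hence $d^\chi_\varphi$ is prime to $p$ for at least one $\varphi\in\IBr_k(B)\subseteq\IBr_k(G)$.

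With such a $\varphi$ in hand, Corollary~\ref{blockdecpprime} applies verbatim and shows that $\OG e(\chi)$ is symmetric if and only if $\chi$ lifts $\varphi$; in particular, symmetry of $\OG e(\chi)$ forces $\chi$ to lift an irreducible Brauer character. Conversely, if $\chi$ lifts any irreducible Brauer character then, as recalled in the introduction (\ref{OGchimatrix}), the algebra $\OG e(\chi)$ is a matrix algebra over $\CO$ and hence symmetric. Together these two implications yield the corollary.

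The one genuinely non-formal ingredient is the degree bound $\varphi(1)_p\geq p^{a-d}$ for $\varphi\in\IBr_k(B)$, so the main point I expect to have to pin down is a precise citation for it; it is standard block theory (see, e.g., the treatment of defects of irreducible Brauer characters in Nagao--Tsushima or Feit). Granting that, the proof is a short valuation count followed by an appeal to Corollary~\ref{blockdecpprime}.
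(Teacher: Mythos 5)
Your proposal is correct and follows essentially the same route as the paper: the paper simply asserts that any row of the decomposition matrix indexed by a height zero character has an entry prime to $p$ and then invokes Corollary~\ref{blockdecpprime}, together with \ref{OGchimatrix} for the converse. Your only addition is to justify that asserted fact via the standard divisibility $p^{a-d}\mid\varphi(1)$ for $\varphi\in\IBr_k(B)$ (a consequence of the surjectivity of the decomposition map, available in Nagao--Tsushima), and your valuation count there is sound.
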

 
By the main result of the first author and Malle \cite{KessarMalle},  
every irreducible character of a finite group lying in a $p$-block  
with abelian defect groups has height zero (the proof in
\cite{KessarMalle} requires the classification of finite simple groups).  
Hence any irreducible character of a finite group lying in a $p$-block 
with abelian defect groups gives rise to a symmetric quotient if and 
only if the character lifts an irreducible Brauer character.

The symmetric algebras arising in the  corollaries  above  are matrix 
algebras (see \ref{OGchimatrix} below).  By contrast, the symmetric 
algebras obtained from nonlinear characters in the next result are not 
isomorphic to matrix algebras.

\begin{Proposition} \label{cyclicindexp}
Let $P$ be a finite $p$-group having a normal cyclic subgroup
of index $p$. Then $\OP e(\chi)$ is symmetric for any
$\chi\in$  $\Irr_K(P)$.
\end{Proposition}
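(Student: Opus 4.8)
The plan is to classify finite $p$-groups $P$ having a normal cyclic
subgroup $C$ of index $p$, and then to verify the statement
case-by-case, using the structure of the irreducible characters
obtained via Clifford theory from $C$. Such groups are well known:
writing $|C|=p^n$, the group $P$ is either abelian (in which case
every $\chi\in\Irr_K(P)$ is linear and $\OP e(\chi)\cong\CO$ is
trivially symmetric), or, when $p$ is odd, $P$ is the unique
nonabelian group with a cyclic maximal subgroup and exponent $p^{n}$
or $p^{n-1}$ appropriately arranged, and when $p=2$ the possibilities
are the dihedral, generalised quaternion, semidihedral, and the
``modular'' (Mersenne-type) group $M_{2^n}$. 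Since linear characters
always give the symmetric quotient $\CO$, the content is entirely in
the nonlinear $\chi$, which (because $C$ has index $p$) all have
degree $p$ and are induced from linear characters $\lambda$ of $C$ with
nontrivial stabiliser behaviour under the conjugation action of $P/C$.

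The heart of the argument will be to compute, for a nonlinear
$\chi=\Ind_C^P(\lambda)$, the quotient $\OP e(\chi)$ explicitly.
First I would note that $\OP e(\chi)$ is an $\CO$-order in the simple
$K$-algebra $KP e(\chi)\cong M_p(K)$, and that its $\CO$-rank is
$p^2$ (since $KP=\bigoplus KP e(\psi)$ and the sum of squares of
degrees is $|P|$, while the number of linear characters is $|P/P'|$,
forcing each nonlinear block to be $p\times p$ and the sum to balance).
So $\OP e(\chi)$ is a full-rank $\CO$-order in $M_p(K)$ of rank $p^2$,
hence is isomorphic to the basic order $\begin{pmatrix}\CO &\cdots\end{pmatrix}$... rather, I would show directly that
$\OP e(\chi)$ is the image of $\OP$ under the representation
$\rho_\lambda\colon P\to GL_p(\CO)$ affording $\chi$, and that this
image, as an $\CO$-algebra, is generated by the image of a generator
$c$ of $C$ (a diagonal matrix $\mathrm{diag}(\zeta,\zeta^q,\dots,
\zeta^{q^{p-1}})$ for a suitable $p^n$-th root of unity $\zeta$ and
the integer $q$ describing the action of $P/C$ on $C$) together with
the image of an element $x\in P\setminus C$ (a monomial matrix,
essentially a permutation matrix times a scalar). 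One then recognises
$\OP e(\chi)$ as a crossed-product order $\CO[\zeta]\rtimes(P/C)$,
i.e.\ as $\CO[\zeta]\langle x\rangle$ with $x^p$ equal to a unit of
$\CO[\zeta]$ and $x c x^{-1}=c^q$. Such an order is well understood:
it is a hereditary (even maximal, when the relevant ramification is
tame) $\CO$-order, and one checks it is symmetric by exhibiting a
symmetrising form, for instance the $\CO$-linear map sending the
basis element $c^i x^j$ to $1$ if $(i,j)$ is a fixed ``top'' index
and to $0$ otherwise, and verifying nondegeneracy and symmetry of the
associated bilinear form $(a,b)\mapsto \lambda_0(ab)$.

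Concretely, I would organise the verification as follows. Let $c$ be
a generator of $C$, let $x\in P$ with $xCx^{-1}=C$ generate $P/C$, so
$xcx^{-1}=c^q$ for some integer $q$ with $q^p\equiv 1\pmod{p^n}$, and
$x^p=c^t$ for a suitable $t$. Pick a faithful linear character
$\lambda$ of $C$ (or of an appropriate subgroup, in the quaternion and
semidihedral cases one must be slightly careful about which $\lambda$
give nonlinear induced characters) and form $\chi=\Ind_C^P\lambda$.
Let $A=\OP e(\chi)$, a free $\CO$-module of rank $p^2$ with basis
$\{\bar c^{\,i}\bar x^{\,j} : 0\le i,j\le p-1\}$ where $\bar c,\bar x$
denote the images. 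Define $s\colon A\to\CO$ to be the $\CO$-linear
form picking out the coefficient of $\bar c^{\,0}\bar x^{\,0}$ in the
expansion of an element; equivalently, up to a unit, $s$ is the
composite of $\rho_\lambda$ with a suitable normalised trace. The
claim $A$ symmetric then amounts to: (i) $s(ab)=s(ba)$ for all
$a,b\in A$, which follows from the relation $\bar x\bar c=\bar c^{\,q}
\bar x$ together with $q^j$ running over a set of residues that makes
the relevant reorderings cancel in pairs or on the ``$\bar c^{\,0}
\bar x^{\,0}$-free'' part; and (ii) the Gram matrix of
$(a,b)\mapsto s(ab)$ in the chosen basis is invertible over $\CO$,
which one reads off as a monomial (generalised permutation) matrix
with unit entries — hence determinant a unit. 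I expect step (i),
ensuring the form is a \emph{class function} / symmetric rather than
merely nondegenerate, to be the main obstacle, because it is exactly
here that the hypothesis ``$C$ has index $p$'' (so that $P/C$ is
cyclic of prime order, and $\bar x^{\,p}$ is central in $A$) is used;
for larger index the naive form need not be symmetric. A clean way to
finish is to invoke that a split-semisimple $K$-algebra quotient
$KPe(\chi)\cong M_p(K)$ has, up to scalar, a unique symmetrising form
(the reduced trace), so $A$ is symmetric iff the reduced trace of
$M_p(K)$ restricts to a form $A\to\CO$ with $A^\perp=0$; and one then
verifies that $\mathrm{trd}(\bar c^{\,i}\bar x^{\,j})\in\CO$ for all
$i,j$ and that the Gram matrix is unimodular, which is the monomial
matrix computation above and is routine once the representing matrices
are written down.
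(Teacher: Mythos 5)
Your overall strategy (reduce to nonlinear characters of degree $p$ induced from a linear character of the cyclic subgroup, realise $\OP e(\chi)$ explicitly as a rank-$p^2$ order with basis $\{\bar c^{\,i}\bar x^{\,j}\}$, and test a trace-type form for unimodularity) is workable and is essentially the same computation the paper does for the dihedral, quaternion and quasidihedral cases via its purity criterion; but as written the proof has genuine gaps exactly at the point where all the content lies. First, the candidate symmetrising form ``coefficient of $\bar c^{\,0}\bar x^{\,0}$'' is not symmetric in general: since $K\tenO \OP e(\chi)\cong M_p(K)$ is central simple, a symmetric form must be a scalar multiple of the trace, and your form agrees with $\frac1p\tr$ only if $\tr(\bar c^{\,i})=\sum_{l}\zeta^{iq^l}$ vanishes for $1\le i\le p-1$; this fails already for $P=D_{16}$, where with $a=\bar t$, $b=\bar c\,\bar t$ one gets $s(ab)=s(\bar c^{\,-1})=\zeta+\zeta^{-1}\neq 0$ while $s(ba)=s(\bar c)=0$. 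Second, the fallback criterion is miscalibrated: nondegeneracy of the reduced trace over $K$ ($A^\perp=0$) is automatic and proves nothing, and the reduced trace itself can never be a symmetrising form here, because for a nonlinear $\chi$ of a $p$-group the reduction $k\tenO V$ is never simple, so $A=\OP e(\chi)$ is a \emph{proper} suborder of $\End_\CO(V)$ and the only possible symmetrising forms are $\pi^{-r}\tr_V$ with $r>0$ (this is the paper's Proposition \ref{EndOVsubalgebras}); in particular the Gram matrix of $\mathrm{trd}$ in your basis is not unimodular, and it is not a monomial matrix with unit entries either (it contains entries such as $\zeta^a+\zeta^{-a}$). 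Third, the assertion that the order is hereditary or maximal is false: for these groups $\OP e(\chi)$ is a local proper suborder of $M_p(\CO)$.

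What is missing, then, is the actual arithmetic: one must identify the correct rescaling (an element $\lambda$ of minimal valuation with $\lambda e(\chi)\in\OP$, equivalently the minimal $r$ with $\pi^r\End_\CO(V)\subseteq A$) and prove unimodularity of the rescaled Gram matrix, which amounts to the $(1-\zeta)$-adic divisibility facts the paper isolates in Lemma \ref{zetaazetab} and its subsequent explicit purity computation for $\OH z$. This step cannot be ``routine'': the same crossed-product-by-$C_p$ template with an abelian but non-cyclic index-$p$ subgroup fails (the paper's Example \ref{wreathExample}, $C_p\wr C_p$), so the hypothesis that $C$ is cyclic must enter precisely through this computation. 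Note also that the paper disposes of $p$ odd (and of the $p=2$ cases where $P$ acts trivially on the index-$2$ subgroup of $C$) much more cheaply: there $Z(P)$ has index $p^2$, every nonlinear character is of central type, and symmetry follows from Propositions \ref{centraltype} and \ref{GNeta}; only the dihedral, generalised quaternion and quasidihedral groups require the explicit computation you are attempting, and for those your sketch currently stops short of the decisive estimates.
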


This will be proved in \S \ref{cyclicindexpSection}. 
Since a nilpotent block is isomorphic to a matrix algebra over
one of its defect group algebras, this proposition, specialised
to $p=$ $2$, has the following consequence (which includes
nilpotent tame blocks). 

\begin{Corollary} \label{nilpotenttameblock}
Suppose that $p=$ $2$. Let $G$ be a finite group and $B$ a
nilpotent block of $\OG$ having a defect group $P$ which 
is either  dihedral, generalised quaternion or semidihedral. 
Then for any $\chi\in$ $\Irr_K(B)$, the algebra $\OG e(\chi)$ 
is symmetric.
\end{Corollary}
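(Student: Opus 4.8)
The plan is to combine Proposition~\ref{cyclicindexp} with the structure theorem for nilpotent blocks recalled just above. Let $B$ denote the block of $\OG$ containing $\chi$, so that $\OG e(\chi)$ is the quotient $Be(\chi)$ of $B$. First I would invoke the theory of nilpotent blocks to fix an isomorphism of $\CO$-algebras $B\cong\mathrm{Mat}_n(\OP)$, where $P$ is the given defect group and $n$ is a suitable positive integer whose precise value plays no role below.

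Next I would transport the central primitive idempotents along this isomorphism. Extending scalars to $K$ gives a $K$-algebra isomorphism $KB\cong\mathrm{Mat}_n(KP)$, which restricts to an isomorphism of centres. Since $Z(\mathrm{Mat}_n(KP))=Z(KP)\cdot I_n$ and the primitive idempotents of $Z(KP)$ are precisely the $e(\psi)$ for $\psi\in\Irr_K(P)$, this yields a bijection $\Irr_K(B)\to\Irr_K(P)$, $\chi\mapsto\chi'$, under which $e(\chi)$ corresponds to the scalar idempotent $e(\chi')\cdot I_n$. Multiplying the image $\mathrm{Mat}_n(\OP)$ of $B$ by this idempotent inside $\mathrm{Mat}_n(KP)$ then gives
\[
\OG e(\chi)\ =\ Be(\chi)\ \cong\ \mathrm{Mat}_n(\OP)\cdot\bigl(e(\chi')\cdot I_n\bigr)\ =\ \mathrm{Mat}_n\bigl(\OP e(\chi')\bigr),
\]
where $\OP e(\chi')$ denotes the quotient of $\OP$ attached to $\chi'$ as in the introduction.

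Finally I would apply two facts. Because $p=2$, any dihedral, generalised quaternion or semidihedral $2$-group $P$ has a cyclic subgroup of index $2$, and a subgroup of index $2$ is automatically normal; hence $\OP$ satisfies the hypothesis of Proposition~\ref{cyclicindexp}, and so $\OP e(\chi')$ is a symmetric $\CO$-algebra. Moreover, a matrix algebra $\mathrm{Mat}_n(A)$ over a symmetric $\CO$-algebra $A$ is again symmetric: if $\lambda$ is a symmetrizing form for $A$, then $M\mapsto\sum_{i=1}^{n}\lambda(M_{ii})$ is one for $\mathrm{Mat}_n(A)$. Combining these two facts with the displayed isomorphism shows that $\OG e(\chi)$ is symmetric, as claimed.

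The only step that is not purely formal is the compatibility of the nilpotent-block isomorphism with the central idempotent decompositions, namely that $e(\chi)$ lands on the scalar idempotent $e(\chi')\cdot I_n$ rather than on some less transparent idempotent of $\mathrm{Mat}_n(KP)$. But this is forced by the fact that an algebra isomorphism carries centres isomorphically onto one another, together with the explicit description $Z(\mathrm{Mat}_n(KP))=Z(KP)\cdot I_n$; so there is no genuine obstacle, and the corollary is essentially a formal consequence of Proposition~\ref{cyclicindexp}.
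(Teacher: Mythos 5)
Your proposal is correct and follows the same route the paper intends: the paper derives Corollary~\ref{nilpotenttameblock} in one line from the fact that a nilpotent block is isomorphic to a matrix algebra over its defect group algebra, combined with Proposition~\ref{cyclicindexp} (the quotient-preservation under this identification being the content of Remark~\ref{MoritaRemark}). You have simply made explicit the transport of $e(\chi)$ through the isomorphism $B\cong\mathrm{Mat}_n(\OP)$ and the standard fact that a matrix algebra over a symmetric $\CO$-algebra is symmetric, all of which is sound.
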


Further examples of characters $\chi$ with symmetric quotient $\OG e(\chi)$
which are not isomorphic to matrix algebras can be obtained from
characters of central type. An irreducible character $\chi$ 
of a finite group $G$ is {\it of central type} if $\chi(1)^2=$ $|G:Z(G)|$.

\begin{Proposition} \label{centraltype}
Let $G$ be a finite group and $\chi\in$ $\Irr_K(G)$ a character
of central type. Then $\OG e(\chi)$ is symmetric.
\end{Proposition}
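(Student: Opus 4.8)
The plan is to show that $\OG e(\chi)$ is isomorphic to a twisted group algebra of the center of a quotient of $G$, and that such twisted group algebras are symmetric. Recall that $\chi$ of central type means $\chi(1)^2 = |G:Z(G)|$, which forces $Z(G)$ to act by a faithful linear character on the $K$-representation affording $\chi$ in the relevant component; concretely, if $\lambda$ is the central character of $Z(G)$ attached to $\chi$ and $N = \ker(\lambda)$, then passing to $\bar G = G/N$ we have an irreducible character $\bar\chi$ of $\bar G$ with $\bar\chi(1)^2 = |\bar G: Z(\bar G)|$ and $Z(\bar G)$ now embeds via a faithful linear character into $K^\times$. After this reduction one knows (a classical fact, going back to work on groups of central type) that $KG\, e(\chi) \cong \End_K(V)$ where $\dim_K V = \chi(1)$, and the point is to track the $\CO$-form: the image of $\OG$ in $\End_K(V)$ is $\OG e(\chi)$, and I want to identify it with a twisted group algebra $\CO_\alpha[Z(\bar G)]$ for a suitable $2$-cocycle $\alpha$ with values in $\CO^\times$.

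The key steps, in order, are: (i) reduce to the case where $Z(G)$ acts by a faithful linear character, replacing $G$ by $G/N$ as above — here $\OG e(\chi)$ is unchanged since $N$ lies in the kernel of the representation; (ii) observe that in this situation $|\bar G : Z(\bar G)| = \bar\chi(1)^2$ together with the fact that $Z(\bar G)$ is cyclic (it embeds in $K^\times$ and is a $p'$-group times... — more carefully, $Z(\bar G)$ embeds into the group of roots of unity in $\CO^\times$, hence is cyclic of order prime to $p$, or at worst we split off the $p$-part, which must then be trivial since a faithful character of a cyclic $p$-group into $\CO^\times$ would require $p$-th roots of unity) forces a clean structure; (iii) show $\OG e(\chi) \cong \CO_\alpha[\bar G/Z(\bar G)]$ as $\CO$-algebras, where $\alpha$ is the $2$-cocycle measuring the obstruction to lifting $\bar G/Z(\bar G)$ back through the central extension, now with coefficients in $\CO^\times$ via the faithful character of $Z(\bar G)$; and (iv) invoke that a twisted group algebra $\CO_\alpha[Q]$ of a finite group $Q$ over $\CO$ is a symmetric $\CO$-algebra, with symmetrizing form given by the coefficient of the identity basis element — this is immediate since the twisted group algebra is $\CO$-free with basis indexed by $Q$ and the standard linear form $\lambda(\sum_q c_q \bar q) = c_1$ satisfies $\lambda(\bar q \bar r) = \lambda(\bar r\, \bar q)$ because $\bar q\bar r$ and $\bar r\bar q$ differ by the central scalar $\alpha(q,r)\alpha(r,q)^{-1}$ and both equal a scalar multiple of $\bar 1$ only when $r = q^{-1}$, in which case the two scalars agree up to the symmetry of the situation — one checks nondegeneracy since the Gram matrix is a monomial matrix.

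The main obstacle I anticipate is step (iii): establishing that the $\CO$-form of $KG\,e(\chi) = \End_K(V)$ picked out as the image of $\OG$ is precisely the twisted group algebra $\CO_\alpha[\bar G/Z(\bar G)]$, rather than merely an $\CO$-order in $\End_K(V)$ Morita equivalent to it. One way around this is to note that over $\CO$ the representation $V$ of $\OG$ restricted to a set of coset representatives of $Z(\bar G)$ in $\bar G$ gives $\CO$-linearly independent elements of $\End_\CO(V)$ (their images mod the radical are linearly independent because the corresponding elements of $kG$ act as a projective twisted-group-algebra basis), so the $\CO$-span of these $\bar\chi(1)^2$ elements is an $\CO$-subalgebra of $\OG e(\chi)$ of full $\CO$-rank, hence equals $\OG e(\chi)$ since the latter is $\CO$-free of the same rank; this subalgebra is visibly $\CO_\alpha[\bar G/Z(\bar G)]$. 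An alternative, perhaps cleaner, route is to exhibit the symmetrizing form directly on $\OG e(\chi)$: the map $a \mapsto \frac{1}{\chi(1)}\chi(a\, \hat{z})$ for suitable central adjustment, or better the restriction to $\OG e(\chi)$ of a symmetrizing form of $\OG$ twisted by a unit, and check nondegeneracy using that $\chi(1)$ is invertible modulo the relevant considerations — but since $p \mid \chi(1)$ is possible, one cannot simply divide, and this is exactly where the twisted-group-algebra description does the real work, because there the symmetrizing form has nothing to do with $\chi(1)$ at all.
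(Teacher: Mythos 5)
Your route is correct in substance and genuinely different from the paper's. The paper argues in one line from its general machinery: since $\chi$ has central type it is the unique irreducible character lying over the linear character $\zeta$ of $Z(G)$ it determines, so $e(\chi)=e(\zeta)$; then $\CO Z(G)e(\zeta)\cong\CO$ is trivially symmetric, and Proposition \ref{GNeta} (a $G$-stable character of a normal subgroup with symmetric quotient yields a symmetric quotient of $\OG$, proved via \ref{symquotients} and the freeness of $\OG$ over the normal subgroup algebra) finishes the proof. You instead identify $\OG e(\chi)$ explicitly: since $Z(G)$ acts on an $\CO$-form $V$ of the module affording $\chi$ by the scalars $\zeta(z)\in\CO^\times$, the image of $\OG$ in $\End_\CO(V)$ is the $\CO$-span of the images of coset representatives of $Z(G)$ in $G$ (elements in the same coset act by unit scalar multiples of one another, so spanning is automatic), and these $|G:Z(G)|$ elements are $K$-linearly independent because $\dim_K\bigl((K\tenO\OG)e(\chi)\bigr)=\chi(1)^2=|G:Z(G)|$ --- this dimension count is exactly where central type enters. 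Hence $\OG e(\chi)\cong\CO_\alpha[G/Z(G)]$, a twisted group algebra with cocycle values roots of unity in $\CO^\times$, and such an algebra is symmetric via the coefficient-of-identity form: the Gram matrix is monomial with unit entries, and symmetry holds because if $qr\in Z(G)$ then $qr=rq$ (or because the cocycle may be normalised). Your approach buys an explicit structural description of the quotient; the paper's buys brevity from \ref{GNeta} and \ref{symquotients}, which it reuses elsewhere.

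Two local corrections are needed. First, your justification of linear independence (``their images mod the radical are linearly independent'') fails: it would assert that the $\chi(1)^2$ images generate $\End_k(k\tenO V)$, i.e.\ that $k\tenO V$ is simple, which is false in general for central type characters (e.g.\ the nonlinear characters of an extraspecial $p$-group, where the only simple $kG$-module is trivial). Replace it by the $K$-dimension count above, which is all you need. Second, the claim in your step (ii) that the $p$-part of $Z(\bar G)$ must vanish because $\CO^\times$ contains no $p$-th roots of unity contradicts the standing hypothesis that $K$ is a splitting field for all finite groups under consideration, so $\CO$ does contain the relevant $p$-power roots of unity; fortunately neither the cyclicity nor any $p'$-condition on $Z(\bar G)$ is used in your argument, and the reduction modulo $\ker\zeta$ is itself dispensable.
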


This is shown as a special case of a slightly more general situation
in \ref{GNeta}.  As a consequence of \ref{cyclicindexp} and 
\ref{centraltype}, if $P$ is a finite $p$-group of order at most $p^3$, 
then $\OP e(\chi)$ is symmetric for all $\chi\in$ $\Irr_K(P)$. 
In \S \ref{ExamplesSection} we give an example showing that
this is not the case in general for irreducible characters of finite 
$p$-groups of order $p^4$.

\medskip

As is explained in  the remarks following  \ref{Aechisymm},  for $G$ a 
finite group, the set of ideals $I$ of $\OG$ such that $\OG/I$ is 
$\CO$-free corresponds bijectively to the set of subsets of $\Irr_K(G)$.      
If $\chi$ is an ordinary irreducible character of $G$ then 
$\OG e(\chi )$ is the quotient of $\OG$ by the ideal which corresponds 
under the above bijection to the subset $\Irr_K(G) -\{\chi\}$  of 
$\Irr_K(G)$. The next two propositions consider the complementary case    
of quotients $\OG /I $, where $I$ corresponds to a one element subset of 
$\Irr_K(G)$. We first consider the case that $I$ corresponds to the 
trivial character of $G$, in which case $I$ can be explicitly described 
as  $\CO(\sum_{x\in G}x)$. The hypothesis of $K$, $k$ being large enough 
is not necessary for the following result.

\begin{Proposition} \label{IOGpure}
Let $G$ be a finite group. The following are equivalent.

\smallskip\noindent (i)
The $\CO$-algebra $\OG/\CO(\sum_{x\in G}x)$ is symmetric.

\smallskip\noindent (ii)
The group $G$ is $p$-nilpotent and has cyclic Sylow-$p$-subgroups.
\end{Proposition}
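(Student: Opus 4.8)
The plan is to analyze the algebra $A = \OG/\CO(\sum_{x\in G}x)$ directly via its quotient field behavior and its radical. First I would observe that $A$ is $\CO$-free of rank $|G|-1$, and that after tensoring with $K$ we get $KG/K(\sum_{x\in G}x) \cong \prod_{\chi \neq 1_G} \mathrm{Mat}_{\chi(1)}(K)$, so $A$ is an $\CO$-order whose quotient field algebra is semisimple; thus $A$ is symmetric iff it carries a symmetrizing form, which I would try to pin down by relating $A$-bilinear forms to the standard symmetrizing form on $\OG$. The key structural point is to use the ideal decomposition discussed before \ref{IOGpure}: the ideal $I = \CO(\sum_{x\in G}x)$ corresponds to the singleton $\{1_G\}$, so $A$ is ``complementary'' to $\OG e(1_G) \cong \CO$. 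I would set $e = \frac{1}{|G|}\sum_{x\in G}x$ when it makes sense, but since $p \mid |G|$ in the interesting case this is not available over $\CO$; instead I would work with $J = \ker(\OG \to \CO)$, the augmentation-type ideal attached to the trivial character, and note $A \cong \OG/\ann_{\OG}(J^{\perp})$-type descriptions, reducing the question to whether the natural pairing on $J$ (or on its dual) is nondegenerate modulo $\mathfrak{m}$.

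For the implication (ii) $\Rightarrow$ (i): if $G = K \rtimes P$ with $K$ a $p'$-group and $P$ cyclic, then $\OG$ is Morita equivalent (indeed isomorphic up to matrix blocks) to a sum of blocks each of which is either a matrix algebra over $\CO$ (the non-principal blocks restricted appropriately) or a serial/Brauer-tree algebra for the principal block with cyclic defect $P$. I would reduce to the principal block $B_0$, where $\OG/\CO(\sum_{x\in G}x)$ splits off the trivial-character piece. Using Dade's description of cyclic defect blocks (as in the setup for \ref{cyclicdefectCor}), the principal block $B_0$ is a Brauer tree algebra whose tree is a line (for $p$-nilpotent groups with cyclic Sylow), and removing the edge at the end corresponding to the trivial character leaves a symmetric algebra; concretely one checks that the resulting quotient is again a Brauer-tree-type symmetric algebra, or directly exhibits a symmetrizing form. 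I expect this direction to follow from explicit knowledge of $B_0$.

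For (i) $\Rightarrow$ (ii): suppose $A = \OG/\CO(\sum_{x\in G}x)$ is symmetric. Reducing mod $\mathfrak{m}$, $\bar A = kG/k(\sum_{x\in G}x)$ is then symmetric over $k$ (a symmetrizing form reduces to one, using $\CO$-freeness and that the form is a perfect pairing over $\CO$). Now $kG$ itself is symmetric, and $k(\sum_{x\in G}x) = \soc(kG)^{G\text{-fixed part}}$ lies in the socle of $kG$; I would use the general principle (which one should extract from the symmetric-quotient machinery in \ref{Aechisymm} and the surrounding discussion) that quotienting a symmetric algebra by a one-dimensional ideal in the socle destroys symmetry unless that ideal is a direct summand, i.e.\ unless $kG \cong k \times (\text{rest})$ as algebras in a suitable sense — forcing the trivial $kG$-module to be projective. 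The trivial module being projective is equivalent to $p \nmid |G|$, which is too strong; so more precisely I expect the correct statement to be that symmetry of the quotient forces the projective cover of the trivial module to be \emph{uniserial} with the relevant composition-length-two configuration, which by standard modular representation theory (Green correspondence, or the classification of groups with cyclic Sylow $p$-subgroups) is equivalent to $G$ being $p$-nilpotent with cyclic Sylow $p$-subgroup. The main obstacle will be making this last step rigorous: translating ``the socle-quotient $\bar A$ is symmetric'' into a precise constraint on the Ext-quiver and Cartan matrix of $kG$ near the trivial module, and then invoking the group-theoretic classification. I would handle this by computing $\dim_k \soc(\bar A)$ and $\dim_k \bar A/\rad(\bar A)$ and forcing the numerical self-duality that symmetry imposes, which should collapse to the condition that the principal block has cyclic defect and trivial inertial index — equivalently (ii).
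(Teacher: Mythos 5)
Both directions of your argument have genuine gaps, and neither invokes the tool that does the real work here, namely \ref{symquotients}: the annihilator of $\CO(\sum_{x\in G}x)$ in $\OG$ is the augmentation ideal $I(\OG)$, so symmetry of the quotient is equivalent to $I(\OG)=\OG z$ for some $z\in Z(\OG)$. For (ii) $\Rightarrow$ (i), your appeal to Brauer trees is both inaccurate and unsubstantiated: for a $p$-nilpotent group the principal block is a nilpotent block isomorphic to $\OP$ (its tree is a single edge, not a line), and the assertion that ``removing the edge at the end corresponding to the trivial character leaves a symmetric algebra'' is nowhere established --- note also that \ref{cyclicdefectCor} concerns the quotient $\OG e(\chi)$ which \emph{keeps only} $\chi$, not the complementary quotient at issue here, so it cannot be cited for this. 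What is actually needed is elementary: if $P=\langle y\rangle$ is cyclic, then $I(\OP)=\OP(y-1)$ with $y-1$ central, the nonprincipal blocks are contained in $I(\OG)$, so $z=(y-1)b+\sum_{b'}b'$ (sum over the nonprincipal block idempotents) is a central element with $I(\OG)=\OG z$, and \ref{symquotients} gives symmetry.

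For (i) $\Rightarrow$ (ii), the reduction to $\bar A=kG/k(\sum_{x\in G}x)$ being symmetric is fine, but the bridge you propose --- that symmetry forces the projective cover of the trivial module to be uniserial, and that this is equivalent to (ii) --- is false as stated: for $G=S_3$ and $p=3$ the projective cover of the trivial module is uniserial of length $3$, yet $S_3$ is not $3$-nilpotent. The constraint that symmetry actually imposes is stronger and comes again from \ref{symquotients}: $I(\OG)=\OG z$ with $z$ central implies, after multiplying by the principal block idempotent $b$ and using that $Z(\OGb)$ is local and $\OGb/I(\OG)b\cong\CO$, that $\OGb$ is a \emph{local} algebra whose radical modulo $J(\CO)$ is a principal ideal; locality of the principal block yields $p$-nilpotency of $G$ (Nagao--Tsushima), and principality of the radical yields uniseriality of the principal block over $k$ (Nakayama's result \cite{Nak40}), hence cyclicity of the Sylow $p$-subgroup. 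Your plan of computing $\dim_k\soc(\bar A)$ and $\dim_k\bar A/\rad(\bar A)$ and ``forcing numerical self-duality'' does not obviously produce either of these two conclusions, and you flag this step yourself as the main obstacle; as it stands the implication is not proved.
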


\begin{Corollary} \label{IZGpure}
Let $G$ be a finite group. The $\Z$-algebra $\Z G/\Z(\sum_{x\in G}x)$
is symmetric if and only if $G$ is cyclic.
\end{Corollary}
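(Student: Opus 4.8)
The plan is to reduce Corollary~\ref{IZGpure} to Proposition~\ref{IOGpure} by applying the latter localised at every prime $p$, together with the observation that symmetry of a $\Z$-order can be detected one prime at a time. Concretely, I would first record the elementary fact that for a finite group $G$, the $\Z$-algebra $A=\Z G/\Z(\sum_{x\in G}x)$ is $\Z$-free of finite rank, and that $A$ is a symmetric $\Z$-algebra if and only if $\CO\otimes_\Z A$ is a symmetric $\CO$-algebra for every complete discrete valuation ring $\CO$ as in the setup of the paper (equivalently, for $\CO=\Z_p$ for every prime $p$ dividing $|G|$). This is because a symmetrising form is a $\Z$-linear map $A\to\Z$ whose associated bilinear form has determinant a unit in $\Z$, i.e. $\pm1$; a nondegenerate symmetric associative form over $\Z$ exists exactly when one exists over every $\Z_p$ and the local Gram determinants are units, and one checks that $\CO\otimes_\Z A\cong\CO G/\CO(\sum_{x\in G}x)$ compatibly with the natural forms. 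The only mild subtlety is to make sure the splitting-field hypothesis in Proposition~\ref{IOGpure} is harmless here, which the paper explicitly notes ("The hypothesis of $K$, $k$ being large enough is not necessary for the following result").

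Granting that, the proof runs as follows. Suppose first that $G$ is cyclic. Then for every prime $p$, a Sylow $p$-subgroup of $G$ is cyclic and $G$ is abelian, hence $p$-nilpotent, so by Proposition~\ref{IOGpure} the $\CO$-algebra $\CO G/\CO(\sum_{x\in G}x)$ is symmetric for each $\CO$; by the local-global principle above, $\Z G/\Z(\sum_{x\in G}x)$ is symmetric. Conversely, suppose $\Z G/\Z(\sum_{x\in G}x)$ is symmetric. Then for every prime $p$ dividing $|G|$, the algebra $\CO G/\CO(\sum_{x\in G}x)$ is symmetric, so by Proposition~\ref{IOGpure} the group $G$ is $p$-nilpotent with cyclic Sylow $p$-subgroups. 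Being $p$-nilpotent for every prime $p$ dividing its order forces $G$ to be nilpotent, hence the direct product of its Sylow subgroups; since each of these is cyclic, $G$ is a direct product of cyclic groups of coprime orders, and therefore $G$ is cyclic.

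The step I expect to require the most care is the local-global reduction, i.e. verifying that symmetry of the $\Z$-algebra $A$ is equivalent to symmetry of all its $p$-adic completions $\Z_p\otimes_\Z A$. The forward direction is immediate (base change a symmetrising form). For the reverse direction one needs that a family of local symmetrising forms can be glued to a global one: since $\Hom_\Z(A,\Z)$ is a finitely generated projective (indeed free) $\Z$-module and $A$ is $\Z$-free, an $A$-bimodule isomorphism $A\xrightarrow{\sim}\Hom_\Z(A,\Z)$ exists over $\Z$ iff it exists over $\Z_p$ for all $p$, by the standard fact that an isomorphism of finitely generated $\Z$-modules can be checked after completion at each prime (the relevant $\Ext$ and $\Hom$ groups localise, and a map of free $\Z$-modules of the same rank is an isomorphism iff its determinant is a $p$-adic unit for every $p$). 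Once this is in place, everything else is a direct appeal to Proposition~\ref{IOGpure} and the elementary group theory identifying groups that are $p$-nilpotent with cyclic Sylow subgroups for all $p$ as precisely the cyclic groups.
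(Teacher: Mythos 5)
Your forward direction (symmetric over $\Z$ $\Rightarrow$ $G$ cyclic) is exactly the paper's argument: base change along $\Z\to\CO$ and apply Proposition \ref{IOGpure} at every prime. The gap is in the converse, which you make rest on the claim that a $\Z$-free $\Z$-algebra $A$ is symmetric as soon as $\Z_p\otimes_\Z A$ is symmetric for every $p$. That local-global principle is false in general, and your sketch does not repair it: for each $p$ you are given \emph{some} bimodule isomorphism $\Z_p\otimes A\cong \Hom_{\Z_p}(\Z_p\otimes A,\Z_p)$, but these need not be completions of a single global element of $\Hom_{A\text{-}A}(A,A^\vee)$; the statement ``a map of free $\Z$-modules is an isomorphism iff its determinant is a unit at every $p$'' applies to one fixed global map, whereas what you need is to \emph{produce} a global map that is simultaneously unimodular at all (infinitely many) primes, and the obstruction to doing so is precisely of genus/class-group type. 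A concrete counterexample to the principle: take $A=\mathcal{O}_K$ the ring of integers of a number field whose different ideal $\mathfrak{d}$ is not principal. Then $A^\vee\cong\mathfrak{d}^{-1}$ is locally free of rank one, so $\Z_p\otimes A$ is a symmetric $\Z_p$-algebra for every $p$, yet $A$ is symmetric over $\Z$ only if $\mathfrak{d}$ is principal. So you cannot invoke the principle without either proving it for the specific algebra $\Z G/\Z(\sum_{x\in G}x)$ or arguing differently.

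The paper avoids the issue entirely by proving the converse directly over $\Z$: Proposition \ref{symquotients} is stated so as to hold over any commutative Noetherian base, and if $G$ is cyclic with generator $y$, then the annihilator in $\Z G$ of the ideal $\Z(\sum_{x\in G}x)$ is the augmentation ideal, which equals $(y-1)\Z G$, a principal ideal generated by the (central) element $y-1$; Nakayama's criterion then gives that $\Z G/\Z(\sum_{x\in G}x)$ is symmetric. Replacing your local-global step by this one-line global argument repairs the proof; the rest of your write-up (base change for the forward direction, and the group theory identifying groups that are $p$-nilpotent with cyclic Sylow $p$-subgroups for all $p$ as the cyclic groups) is fine.
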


The arguments used in the proof of \ref{IOGpure} can be adapted
to yield the following block theoretic version; we need $k$ 
to be large enough for the block $B$ in the next result. 

\begin{Proposition} \label{Bechipure}
Let $G$ be a finite group, $B$ a block algebra of $\OG$, and let
$\chi\in$ $\Irr_K(B)$. Suppose that $\chi$ lifts an irreducible
Brauer character. 
Set $I=$ $B\cap (K\tenO B)e(\chi)$, where we identify $B$ with its
image $1\ten B$ in $K\tenO B$. The following are equivalent.

\smallskip\noindent (i)
The algebra $B/I$ is symmetric.

\smallskip\noindent (ii)
The block $B$ is nilpotent with cyclic defect groups.
\end{Proposition}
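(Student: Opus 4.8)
The plan is to convert ``$B/I$ is symmetric'' into a statement about principality of one ideal of $B$, and then to extract the block-theoretic information. Write $e=e(\chi)$, put $C=(K\tenO B)(1-e)$ and $J=B\cap C$, so that $J=\ker(B\to Be)$; since $\chi$ lifts a Brauer character, $Be\cong M_{\chi(1)}(\CO)$, hence $k\tenO Be\cong M_{\chi(1)}(k)$ is semisimple — this is the only point at which the lifting hypothesis intervenes, and it is essential. The canonical symmetrizing form $s\colon\OG\to\CO$, $\sum_g a_gg\mapsto a_1$, restricts to a symmetrizing form of $B$, and I would first verify that $I^\perp=J$ with respect to it: extending $s$ to $K\tenO B=\prod_\psi(K\tenO B)e(\psi)$ it is a nonzero scalar multiple of the reduced trace on each Wedderburn factor, while $I\subseteq(K\tenO B)e$ is killed by every factor except the one labelled $\chi$, on which the induced bilinear form is nondegenerate. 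Since $B\cong B^\ast$ as $B$-bimodules, this gives $(B/I)^\ast\cong I^\perp=J$ as $B$-bimodules, and because $eB$ and $(1-e)B$ multiply to zero one has $I\cdot J=J\cdot I=0$, so it is even an isomorphism of $B/I$-bimodules. Therefore $B/I$ is symmetric if and only if $B/I\cong J$ as $B$-bimodules.

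The next reduction is purely formal: $B/I\cong J$ as $B$-bimodules if and only if $J=zB$ for some $z\in Z(B)$. Indeed, any isomorphism $B/I\cong J$ composed with $B\to B/I$ and $J\hookrightarrow B$ is a $B$-bimodule endomorphism of $B$, hence multiplication by $z:=$ its value at $1$, which lies in $Z(B)$, and its image is $J$; conversely, if $J=zB$ with $z$ central then a comparison of central-character components shows $\ann_B(z)=I$, and $b+I\mapsto zb$ is the required isomorphism. Thus the Proposition becomes: $J$ is generated by a central element of $B$ if and only if $B$ is nilpotent with cyclic defect groups. For the implication from nilpotency I would use Puig's theorem that a nilpotent block with defect group $P$ is isomorphic to $M_m(\CO P)$; here $P$ is cyclic, generated by some $x_0$, under the isomorphism $\chi$ corresponds to a (necessarily linear) character $\chi'$ of $P$, and $J$ corresponds to $M_m(\ker(\chi'\colon\CO P\to\CO))$. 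As $\chi'(x_0)$ is a root of unity lying in $\CO$, the element $x_0-\chi'(x_0)$ generates a pure ideal of $\CO$-corank one contained in $\ker\chi'$, hence equal to it; so $J$ is generated by the central element $(x_0-\chi'(x_0))\cdot1_{M_m}$, and $B/I$ is symmetric.

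For the converse, suppose $J=zB$ with $z\in Z(B)$. Because $I$ and $J$ are pure $\CO$-submodules of $B$, reduction modulo the maximal ideal of $\CO$ produces $\bar z\in Z(kB)$ with $\bar z\cdot kB=k\tenO J$ and $kB/\bar z kB\cong k\tenO Be\cong M_{\chi(1)}(k)$, which is semisimple. Since $kB$ is indecomposable, $Z(kB)$ is local; $\bar z$ cannot be a unit, so $\bar z\cdot kB\subseteq\rad(Z(kB))\cdot kB\subseteq\rad(kB)$, and semisimplicity of the quotient forces $\rad(kB)=\bar z\cdot kB$. Hence $\rad(kB)$ is principal with a central generator and $kB/\rad(kB)\cong M_{\chi(1)}(k)$ is simple, so $kB$ has a unique simple module; moreover each layer $\rad(kB)^i/\rad(kB)^{i+1}=\bar z^{\,i}kB/\bar z^{\,i+1}kB$ is a $kB$-bimodule generated by the central element $\bar z^{\,i}$, hence isomorphic to $kB/\rad(kB)$ or zero, and choosing a Wedderburn complement one obtains $kB\cong M_{\chi(1)}(k[t]/(t^N))$, where $N$ is the nilpotency degree of $\bar z$. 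In particular $kB$ is of finite representation type, so $B$ has cyclic defect groups; then $kB$ is a Brauer tree algebra, and since it has a single simple module its Brauer tree is a single edge, i.e.\ $B$ is nilpotent.

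I expect the main obstacle to be this last step of the converse: securing the structure of $kB$ after the mod-$\mathfrak m$ reduction and then invoking the characterization of blocks of finite representation type as precisely those with cyclic defect group (together with the structure theory of Brauer trees for the single-edge conclusion). The remaining arguments are essentially bookkeeping with the symmetrizing form and with the purity of $I$ and $J$; the delicate point to keep in focus is that the hypothesis ``$\chi$ lifts a Brauer character'' is used exactly to make $kB/\bar z kB$ semisimple — without it, $B/I$ can be symmetric for a non-nilpotent block (for instance the principal $3$-block of $S_3$ with $\chi$ its $2$-dimensional character).
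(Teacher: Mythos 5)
Your proof is correct, and its skeleton is the same as the paper's: everything is funnelled through the criterion that $B/I$ is symmetric if and only if its annihilator $J=\ann(I)=B\cap (K\tenO B)(1-e(\chi))$ equals $zB$ for some $z\in Z(B)$, and the forward direction then reduces modulo $J(\CO)$, using the lifting hypothesis (i.e.\ \ref{OGchimatrix}, so that $Be(\chi)\cong M_{\chi(1)}(\CO)$) to get $\rad(kB)=\bar z\, kB$ and a unique simple module. The differences are in the ingredients. You re-derive the central-generator criterion by hand, via $I^{\perp}=J$ for the canonical form and the bimodule duality $(B/I)^{\vee}\cong J$; the paper simply quotes Nakayama's theorem (Proposition \ref{symquotients}), so your version is a self-contained substitute, correct but not shorter. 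In the step from ``$\rad(kB)$ centrally principal with simple quotient'' to ``cyclic defect,'' the paper cites Nakayama's 1940 result that such an algebra is uniserial, whereas you extract the explicit structure $kB\cong M_{\chi(1)}(k[t]/(t^{N}))$ by a Wedderburn--Malcev and dimension-count argument and then invoke the theorem that blocks of finite representation type have cyclic defect groups; both routes finish with ``cyclic defect plus one simple module implies nilpotent.'' Your Wedderburn--Malcev computation is more elementary and gives more information, at the price of appealing to the finite-representation-type characterisation instead of \cite{Nak40}. For (ii)$\Rightarrow$(i), the paper transports the quotient through the Morita equivalence $B\sim\CO P$ (Remark \ref{MoritaRemark}), twists $\chi$ to the trivial character, and quotes Proposition \ref{IOGpure}; you instead exhibit the central generator $(x_{0}-\chi'(x_{0}))\cdot 1$ of $J$ directly inside $M_{m}(\CO P)$, which amounts to reproving the relevant case of \ref{IOGpure} and is equally valid. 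Finally, your closing observation that the lifting hypothesis is exactly what makes (i)$\Rightarrow$(ii) work, with the principal $3$-block of $S_{3}$ and its $2$-dimensional character as the counterexample otherwise, is accurate and a nice complement to the statement.
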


%%%%%%%%%%%%%%%%%%%%%%%%%%%%%%%%%%%%%%%%%%%%%%%%%%%%%%%%%%%%%%%%%%
\section{Notation and basic facts} \label{backgroundSection}

If $A$ is an $\CO$-algebra which is free of finite rank as an
$\CO$-module, we denote by $\Irr_K(A)$ the set of characters
of the simple $K\tenO A$-modules. 
Taking characters of $K\tenO A$-modules yields an isomorphism between 
the Grohendieck group $R_K(A)$ of finitely generated $K\tenO A$-modules 
and the free abelian group with basis $\Irr_K(A)$, inducing a 
bijection between the isomorphism classes of simple $K\tenO A$-modules
and $\Irr_K(A)$. If in addition the $K$-algebra $K\tenO A$ is
semisimple, hence a direct product of simple $K$-algebras
corresponding to the isomorphism classes of simple $K\tenO A$-modules,
we denote by $e(\chi)$ the primitive idempotent of $Z(K\tenO A)$
which acts as identity on the simple $K\tenO A$-modules with
character $\chi$ and which annihilates all other simple 
$K\tenO A$-modules. In this case we have $K\tenO A\cong$
$\prod_{\chi\in\Irr_K(A)}\ (K\tenO A)e(\chi)$, and each factor
$(K\tenO A)e(\chi)$ is a simple $K$-algebra. 
An $\CO$-algebra $A$ is {\it symmetric} if $A$ is finitely generated
projective as an $\CO$-module and if $A$ is isomorphic to its
$\CO$-dual $A^\vee =$ $\Hom_\CO(A,\CO)$ as an $A$-$A$-bimodule (this
definition makes sense with $\CO$ replaced by an arbitrary commutative ring).
The image $s$ in $A^\vee$ of $1_A$ under some bimodule isomorphism
$A\cong$ $A^\vee$ is called a {\it symmetrising form of} $A$.
If $s$, $s'$ are two symmetrising forms of $A$, then $s'=$ $z\cdot s$
for a unique $z\in$ $Z(A)^\times$, because any bimodule automorphism
of $A$ is given by left or right multiplication with an invertible
element in $Z(A)$. The ring $\CO$ has the property that every
finitely generated projective $\CO$-module is free. In particular, 
if $A$ is a symmetric $\CO$-algebra, then $A$ is free as an $\CO$-module,
and hence any symmetrising form $s\in$ $A^\vee$ of $A$ satisfies $s(A)=$ 
$\CO$. Let $A$ be an $\CO$-algebra which is finitely generated as an 
$\CO$-module. Since $\CO$ is Noetherian, a submodule $V$ of a finitely 
generated $A$-module $U$ is $\CO$-{\it pure} if and only if $V$ is a 
direct summand of $U$ as an $\CO$-module (see e.g \cite[\S 4J]{Lam} for 
more details on the notion of pure submodules over arbitrary commutative 
rings). We use without further comments the following well-known
facts. See e. g. \cite[17.2]{Feit}, and also \cite[Theorem 1]{Thompson67}
for an application in the context of blocks with cyclic defect groups.

\begin{Lemma} \label{OpureLemma}
Let $A$ be an $\CO$-algebra and let $U$ be an $A$-module which is finitely
generated free as an $\CO$-module. Let $V$ be a submodule of $U$.
Then $U\cap (K\tenO V)$ is the unique minimal $\CO$-pure submodule
of $U$ containing $V$, where we identify $U$ with its image $1\ten U$
in $K\tenO U$. Moreover, the following are equivalent.

\smallskip\noindent (i)
The $A$-module $V$ is $\CO$-pure in $U$.

\smallskip\noindent (ii)
The $A$-module $U/V$ is $\CO$-free.

\smallskip\noindent (iii)
We have $J(\CO)V= J(\CO)U\cap V$.

\smallskip\noindent (iv)
The image of $V$ in $k\tenO U$ is isomorphic to $k\tenO V$.

\smallskip\noindent (v)
We have $V= U\cap (K\tenO V)$, where we identify $U$ to its image $1\ten U$ in 
$K\tenO U$.
\end{Lemma}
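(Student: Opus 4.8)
The plan is to reduce every assertion to the structure theory of finitely generated modules over the principal ideal domain $\CO$, in particular to the fact that such a module is $\CO$-free if and only if it is torsion-free, equivalently $\pi$-torsion-free, where $\pi$ is a generator of $J(\CO)$. Since $U$ is $\CO$-free it is torsion-free, so $U$ embeds into $K\tenO U$, and we may regard $U$, $V$ and $W:=K\tenO V$ (the $K$-span of the image of $V$) as $\CO$-submodules of $K\tenO U$; as $V$ is an $A$-submodule of $U$, the subspace $W$ is a $K\tenO A$-submodule of $K\tenO U$, so $U\cap W$ is an $A$-submodule of $U$ containing $V$. Note also that $V$, being a submodule of the $\CO$-free module $U$, is itself $\CO$-free.

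First I would prove the assertion about $U\cap(K\tenO V)=U\cap W$. The quotient $U/(U\cap W)$ embeds into $(K\tenO U)/W$, which is a $K$-vector space and hence $\CO$-torsion-free; being finitely generated over $\CO$ it is therefore $\CO$-free, so $U\cap W$ is $\CO$-pure in $U$. For minimality, let $V'$ be any $\CO$-pure submodule of $U$ with $V\subseteq V'$. Then $U/V'$ is $\CO$-free, hence torsion-free, which forces $V'=U\cap(K\tenO V')$; since $K\tenO V\subseteq K\tenO V'$, we obtain $U\cap W\subseteq U\cap(K\tenO V')=V'$. Thus $U\cap W$ is contained in every $\CO$-pure submodule containing $V$, which proves both minimality and uniqueness.

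Next I would run through the five equivalences. For (i)$\Leftrightarrow$(ii) I use the characterisation recalled just before the lemma, that $\CO$-purity of $V$ in $U$ is the same as $V$ being an $\CO$-direct summand of $U$: if $U/V$ is $\CO$-free the sequence $0\to V\to U\to U/V\to 0$ splits over $\CO$, and conversely a complement of $V$ is isomorphic to $U/V$ and is $\CO$-free, being a submodule of the free module $U$. For (ii)$\Leftrightarrow$(iii) I note that, unwinding, the equality $J(\CO)U\cap V=J(\CO)V$ says precisely that $\pi u\in V$ implies $u\in V$, i.e. that $U/V$ has no $\pi$-torsion, which over $\CO$ is equivalent to $U/V$ being $\CO$-free. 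For (ii)$\Leftrightarrow$(iv) I apply $k\tenO-$ to $0\to V\to U\to U/V\to 0$; since $U$ is $\CO$-free, $\Tor^\CO_1(k,U)=0$, so the long exact sequence identifies the kernel of the canonical map $k\tenO V\to k\tenO U$ with $\Tor^\CO_1(k,U/V)$, which, using the resolution $0\to\CO\xrightarrow{\pi}\CO\to k\to 0$, equals the $\pi$-torsion submodule of $U/V$. Hence this canonical map is injective exactly when $U/V$ is torsion-free; and since $V$ is $\CO$-free we have $\dim_k(k\tenO V)=\rk_\CO V$, so that ``the image of $V$ in $k\tenO U$ is isomorphic to $k\tenO V$'' is equivalent, by a dimension count, to injectivity of the canonical map. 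Finally (i)$\Leftrightarrow$(v) is immediate from the first part: $V$ is an $\CO$-pure submodule containing $V$ precisely when it coincides with the unique minimal one, namely $U\cap(K\tenO V)$.

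I expect no real obstacle; the only subtlety worth flagging is condition (iv), where one must observe that it is the \emph{canonical} comparison map $k\tenO V\to k\tenO U$ whose injectivity is at stake — which is exactly why the $\Tor$-computation above is the appropriate tool, rather than an \emph{ad hoc} argument with an abstract $k$-linear isomorphism carrying no compatibility. Everything else is routine bookkeeping with the two displayed short exact sequences and the fact that finitely generated torsion-free $\CO$-modules are free.
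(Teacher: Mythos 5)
Your proof is correct: every step (the torsion-freeness/freeness dichotomy for finitely generated $\CO$-modules, the purity of $U\cap(K\otimes_\CO V)$ and its minimality, the equivalences (i)--(v) including the $\mathrm{Tor}$-identification of the kernel of $k\otimes_\CO V\to k\otimes_\CO U$ with the $\pi$-torsion of $U/V$ and the dimension count in (iv)) checks out, and you correctly note that $V$ is finitely generated and free since $\CO$ is a discrete valuation ring. The paper gives no proof of this lemma, citing it as well known (Feit, Thompson), and your argument is exactly the standard one those sources have in mind, so there is nothing to compare beyond confirming correctness.
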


Thus if $I$ is an ideal in an $\CO$-algebra $A$ which is finitely
generated free as an $\CO$-module, then the quotient algebra $A/I$ is
$\CO$-free if and only if $I$ is $\CO$-pure in $A$. Any $\CO$-pure ideal 
$I$ of $A$ is equal to $A\cap M_I$ for a unique ideal $M_I$ in $K\tenO A$, 
where we have identified $A$ to its canonical image $1_K\ten A$ in 
$K\tenO A$. If $K\tenO A$ is in addition semisimple, hence a direct product of
simple algebras, then every ideal of $K\tenO A$ is a product of a
subset of those simple algebras. In particular, in that case the set of 
$\CO$-pure ideals in $A$ is finite and corresponds bijectively to the set 
of subsets of a set of representatives of the isomorphism classes
of simple $K\tenO A$-modules.

\medskip
The next result holds with $\CO$ replaced by an arbitrary
commutative Noetherian ring. For symmetric algebras over a field, this 
result is due to Nakayama \cite{Nak39}. The generalisation to algebras 
over commutative Noetherian rings is straightforward (we include a proof 
for the convenience of the reader). Note that the left and right 
annihilators of an ideal $I$ in a symmetric algebra $A$ are always 
equal, denoted by $\ann(I)$. 

\begin{Proposition} [{cf. \cite[Theorem 13]{Nak39}}]
\label{symquotients}
Let $A$ be a symmetric $\CO$-algebra, and let $I$ be an $\CO$-pure ideal
in $A$. The quotient algebra $A/I$ is a symmetric $\CO$-algebra if
and only if there is an element $z\in$ $Z(A)$ such that $\ann(I)=$ $Az$.
\end{Proposition}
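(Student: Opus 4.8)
The plan is to prove this via the standard characterization of symmetric quotients: if $A$ is symmetric with symmetrizing form $s$, then for any ideal $I$ one has a bimodule isomorphism $A/I \cong \ann(I)^\vee$ obtained by restricting $s$, so $A/I$ is symmetric precisely when $\ann(I) \cong (A/I)^\vee \cong A/I$ as bimodules. First I would set up the identification $A \cong A^\vee$ given by $a \mapsto s_a$, where $s_a(b) = s(ab)$; under this identification, the $\CO$-dual $(A/I)^\vee$, viewed as a subspace of $A^\vee$ (the linear forms vanishing on $I$), corresponds to the set $\{a \in A : s(ab) = 0 \text{ for all } b \in I\} = \ann(I)$, using that the form is symmetric so left and right annihilators agree. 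Since $I$ is $\CO$-pure, $A/I$ is $\CO$-free, so $(A/I)^\vee$ is a genuine $\CO$-lattice and $\ann(I)$ is $\CO$-pure in $A$; the restriction of $s$ to $\ann(I) \times A$ induces a bimodule isomorphism $\ann(I) \xrightarrow{\sim} (A/I)^\vee$.

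Given this, $A/I$ is symmetric iff $A/I \cong (A/I)^\vee \cong \ann(I)$ as $A$-$A$-bimodules, equivalently as $A/I$-$A/I$-bimodules since both are annihilated by $I$ on each side. For the forward direction, suppose $A/I$ is symmetric; then there is a bimodule isomorphism $\varphi: A/I \xrightarrow{\sim} \ann(I)$. Let $z \in A$ be a lift of $\varphi(\bar 1)$. The bimodule property forces $\bar a z \equiv z \bar a$ modulo... actually more carefully: $\varphi$ is determined by $\varphi(\bar a) = \bar a \cdot \varphi(\bar 1) = \varphi(\bar 1) \cdot \bar a$, and since $\varphi$ is surjective onto $\ann(I)$ we get $\ann(I) = (A/I) \cdot z$; but $(A/I) \cdot z = Az$ inside $A$ once we check $Iz = 0$, which holds because $z \in \ann(I)$ and annihilators are two-sided. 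The condition $\bar a z = z \bar a$ for all $a$, i.e. $az - za \in I$ for all $a$, together with $z \in \ann(I)$: I would argue that $az - za$ lies in $I \cap \ann(I)$... hmm, this needs care. The cleaner route: the bimodule structure on $\ann(I)$ as a quotient of $A$ via $z$ means $\ann(I) \cong A/\ann(z)$ as left modules where the map is $a \mapsto az$; the right action being compatible is exactly the statement that $a \mapsto az$ is also a right module map $A/I^{\mathrm{r}} \to \ann(I)$, forcing $az = za$ in $\ann(I)$, hence $z$ is "central enough." The precise claim to extract is: $\ann(I) = Az$ with $z$ such that left multiplication by $z$ equals right multiplication by $z$ on all of $A$ modulo $\ann$-considerations — and one upgrades this to $z \in Z(A)$ by replacing $z$ with its image under the symmetrizing identification, or by a direct check. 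Conversely, if $\ann(I) = Az$ for $z \in Z(A)$, then the map $A/I \to \ann(I) = Az$, $\bar a \mapsto az$, is a well-defined (since $Iz \subseteq I \cap \ann(I)$, and $z$ central gives $Iz = zI \subseteq \ann(I) \cdot I = 0$... wait, $\ann(I)I = 0$ yes) surjective bimodule homomorphism; it is injective because its kernel is $\{a : az = 0\} \cap$ (stuff), and a rank count using $\CO$-purity and the isomorphism $\ann(I) \cong (A/I)^\vee$ (same $\CO$-rank as $A/I$) forces it to be an isomorphism after tensoring with $K$, hence an isomorphism of $\CO$-pure lattices. Composing with the bimodule isomorphism $\ann(I) \cong (A/I)^\vee$ gives $A/I \cong (A/I)^\vee$, so $A/I$ is symmetric.

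The main obstacle I anticipate is the passage, in the forward direction, from "$\ann(I)$ is isomorphic as a bimodule to $A/I$" to "$\ann(I) = Az$ for a genuinely central $z \in Z(A)$": one must produce the central element, not merely an element generating $\ann(I)$ as a left ideal. I expect to handle this by using the symmetrizing form to transport the problem — under $A \cong A^\vee$, a bimodule generator of $\ann(I) \cong (A/I)^\vee$ corresponds to an element whose left and right translates coincide by the bimodule-map property, and unravelling the definition of the form shows this element is fixed by all inner automorphisms, hence central (using that $\CO$ is a domain and $A$ is $\CO$-free, so there is no torsion obstruction). A secondary technical point is verifying $\CO$-purity of $\ann(I)$ and that the quotient $A/\ann(z)$ is $\CO$-free, which follows from Lemma \ref{OpureLemma} applied to the kernel of $a \mapsto az$ once we know the $K$-linear version of the statement, i.e. the analogous fact over the semisimple (or at least symmetric) algebra $K \tenO A$.
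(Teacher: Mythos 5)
Your overall route is essentially the paper's argument in different packaging: the identification $a\mapsto a\cdot s$ of $A$ with $A^\vee$, under which the forms vanishing on $I$ correspond exactly to $\ann(I)$ (using that $I$ is an ideal and $s$ is symmetrising), is the engine in both proofs. Your converse direction is correct in outline: for $z\in Z(A)$ with $\ann(I)=Az$, the map $\bar a\mapsto az$ is a well-defined surjective bimodule homomorphism $A/I\to\ann(I)$, and since $\ann(I)\cong (A/I)^\vee$ has the same $\CO$-rank as the $\CO$-free module $A/I$, a surjection between free $\CO$-modules of equal finite rank is automatically bijective; composing with $\ann(I)\cong (A/I)^\vee$ gives the symmetry of $A/I$.

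The genuine gap is in the forward direction, at precisely the point you flag as ``the main obstacle'': you never actually produce a central $z$, and your partial argument misstates what the bimodule property gives. If $\varphi: A/I\to\ann(I)$ is a bimodule isomorphism, then because $\varphi$ takes its values in $\ann(I)\subseteq A$ (no lift of $\varphi(\bar 1)$ is needed), and because $I\,\ann(I)=\ann(I)\,I=\{0\}$ so the $A/I$-actions on $\ann(I)$ are honest multiplication inside $A$, the element $z=\varphi(\bar 1)$ satisfies the exact equalities $az=\varphi(\bar a)=za$ in $A$ for all $a\in A$ --- not merely $az-za\in I$, and no ``upgrade modulo $\ann$-considerations'' is required. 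Hence $z\in Z(A)$ and $\ann(I)=\varphi(A/I)=Az$ by surjectivity, which closes your argument in one line. The paper closes the same step on the level of forms: the pullback $t=\bar t\circ\pi$ of a symmetrising form of $A/I$ is a symmetric form on $A$, hence equals $z\cdot s$ for some $z\in Z(A)$ (symmetric forms correspond to central elements under $a\mapsto a\cdot s$); then $z\in\ann(I)$, and any $b\in\ann(I)$ induces a form on $A/I$, hence lies in $Az$. This is exactly the ``transport through the symmetrising identification'' you gesture at in your final paragraph, but as written your proposal leaves the decisive step --- centrality of the generator --- unproved.
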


\begin{proof}
Set $\bar A=$ $A/I$, and denote by $\pi : A\to$ $\bar A$ the canonical
surjection. Since $I$ is $\CO$-pure, it follows that $A/I$ is finitely
generated projective as an $\CO$-module. Let $s : A\to$ $\CO$ be a 
symmetrising form for $A$; that is, $s$ is symmetric and the map sending 
$a\in$ $A$ to the form $a\cdot s$ defined by $(a\cdot s)(b)=$ $s(ab)$ 
for all $b\in$ $A$ is an isomorphism $A\cong$ $A^\vee=$ $\Hom_\CO(A,\CO)$.
(Since $s$ is symmetric, this map is automatically a homomorphism
of $A$-$A$-bimodules.)  
Suppose that $\ann(I)=$ $Az$ for some $z\in$ $Z(A)$.
Then $t=$ $z\cdot s$ annihilates $I$, hence induces a form
$\bar t\in$ $\bar A$ defined by $\bar t(\bar a)=$ $s(za)$,
where $a\in$ $A$ and $\bar a=$ $a+I\in$ $\bar A$. Since $s$ is
symmetric and $z\in$ $Z(A)$, the forms $t$ and $\bar t$ are again
symmetric. It suffices to show that the map sending $\bar a\in$ $\bar A$
to $\bar a\cdot \bar t\in$ $\bar A^\vee$ is surjective. Let
$\bar u\in$ $\bar A^\vee$. Then $u=$ $\bar u\circ \pi\in$ $A^\vee$, hence
$u=$ $a\cdot s$ for a uniquely determined element $a\in$ $A$.
Since $I\subseteq$ $\ker(u)$, we have $s(aI)=$ $\{0\}$, hence
$a\in$ $\ann(I)$. Thus $a=$ $cz$ for some $c\in$ $A$. It follows that
$u=$ $c\cdot(z\cdot s)=$ $c\cdot t$, and hence $\bar u=$ 
$\bar c\cdot \bar t$, which shows that $\bar A$ is symmetric. 
Suppose conversely that $\bar A$ is
symmetric. Let $\bar t : \bar A\to$ $\CO$ be a symmetrising form, and
set $t=$ $\bar t\circ \pi$. Then $t$ is symmetric (because $\bar t$ is)
and hence $t=$ $z\cdot s$ for some $z\in$ $Z(A)$. Since $I\subseteq$
$\ker(t)$ we have $s(zI)=$ $\{0\}$, hence $z\in$ $\ann(I)$. Let
$b\in$ $\ann(I)$. Then $u=$ $b\cdot s$ has $I$ in its kernel, hence
induces a form $\bar u$ on $\bar A$ satisfying $\bar u(\bar a)=$
$s(ab)$. Since $\bar A$ is symmetric, there is $\bar c\in$ $\bar A$
such that $\bar u=$ $\bar c\cdot \bar t$, hence such that
$u=$ $c\cdot t=$ $(cz)\cdot s$. Thus $(cz)\cdot s=$ $b\cdot s$, and
hence $b=$ $cz\in$ $Az$, whence the equality $\ann(I)=$ $Az$.
\end{proof}

Let $A$ be a symmetric $\CO$-algebra. If $z\in$ $Z(A)$ such that
$Az$ is the annihilator of an $\CO$-pure ideal $I$ in $A$, then $Az$ is
$\CO$-pure. Thus
finding symmetric quotients of $A$ is equivalent to finding elements
$z\in$ $Z(A)$ with the property that $Az$ is $\CO$-pure in $A$.

\begin{Corollary} \label{AzpureCor}
Let $A$ be a symmetric $\CO$-algebra and $z\in$ $Z(A)$.
If $Az$ is $\CO$-pure, then the annihilator $I=$ $\ann(z)=$ $\ann(Az)$
is an $\CO$-pure ideal satisfying $\ann(I)=$ $Az$, and the
$\CO$-algebra $A/I$ is symmetric. Moreover, any symmetric
$\CO$-algebra quotient of $A$ arises in this way.
\end{Corollary}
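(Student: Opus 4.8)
The plan is to deduce this corollary directly from Proposition \ref{symquotients} together with Lemma \ref{OpureLemma}, so that essentially no new computation is needed. First I would observe that since $A$ is a symmetric $\CO$-algebra and $z\in Z(A)$, the left and right annihilators of $z$ coincide — this is the remark preceding Proposition \ref{symquotients} — so $I=\ann(z)=\ann(Az)$ is a genuine two-sided ideal, and moreover $\ann(Az)=\ann(I)$ follows formally once one checks that $z\in I^{\perp}$ in the appropriate sense; concretely, using the symmetrising form $s$, the equality $\ann(I)=Az$ is what we are aiming for. The key input is the hypothesis that $Az$ is $\CO$-pure in $A$: by Lemma \ref{OpureLemma} this means $A/Az$ is $\CO$-free, hence $Az$ is a direct $\CO$-summand of $A$.

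Next I would show $I=\ann(z)$ is $\CO$-pure. The cleanest route is via the symmetrising form: the map $a\mapsto a\cdot s$ is a bimodule isomorphism $A\cong A^\vee$, and under this identification $\ann(z)$ corresponds to the set of forms vanishing on $Az$, i.e.\ to $(A/Az)^\vee$ pulled back along $A\to A/Az$. Since $A/Az$ is $\CO$-free (hence $\CO$-pure as a quotient), its $\CO$-dual is an $\CO$-pure submodule of $A^\vee$, and transporting back through the isomorphism shows $\ann(z)$ is $\CO$-pure in $A$. Equivalently, one can argue that $Az\cong A/I$ as left $A$-modules (via $a\mapsto az$), so $A/I$ is $\CO$-free because $Az$ is a direct $\CO$-summand of the $\CO$-free module $A$; then criterion (ii)$\Leftrightarrow$(i) of Lemma \ref{OpureLemma} gives $\CO$-purity of $I$.

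With $I$ an $\CO$-pure ideal, Proposition \ref{symquotients} applies: $A/I$ is symmetric if and only if $\ann(I)=Az'$ for some $z'\in Z(A)$. So it remains to verify $\ann(I)=Az$. The inclusion $Az\subseteq\ann(I)$ is clear since $I=\ann(z)$. For the reverse inclusion I would use the duality again: $\ann(I)$ corresponds under $a\mapsto a\cdot s$ to the forms vanishing on $I$, and since $I$ corresponds to $(A/Az)^\vee\subseteq A^\vee$, its annihilator-of-forms is exactly the $\CO$-pure submodule $Az$ — here one uses that $Az$ is $\CO$-pure so that the double-annihilator operation inside the $\CO$-free module $A$ returns $Az$ itself (this is where $\CO$-purity of $Az$ is genuinely used, not just that of $I$). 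Hence $\ann(I)=Az$ with $z\in Z(A)$, and Proposition \ref{symquotients} yields that $A/I$ is symmetric. Finally, for the last sentence: given any $\CO$-pure ideal $J$ with $A/J$ symmetric, Proposition \ref{symquotients} furnishes $z\in Z(A)$ with $\ann(J)=Az$; then $Az$ is $\CO$-pure (as noted in the paragraph before the corollary, being the annihilator of an $\CO$-pure ideal in a symmetric algebra) and $J=\ann(\ann(J))=\ann(Az)=\ann(z)$ by the double-annihilator property in symmetric algebras, so $J$ arises from $z$ exactly as described.

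The main obstacle I anticipate is the double-annihilator bookkeeping: one must be careful that $\ann(\ann(J))=J$ and $\ann(\ann(Az))=Az$ genuinely hold for the relevant $\CO$-pure ideals, since over $\CO$ (as opposed to a field) double annihilators need not recover the original submodule in general — it is precisely $\CO$-purity that rescues this, via the identification of $\CO$-pure submodules with duals of $\CO$-free quotients under the bimodule isomorphism $A\cong A^\vee$. Making that identification precise, rather than the formal manipulations around it, is the one step that deserves care.
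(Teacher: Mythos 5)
Your proposal is correct and follows essentially the same route as the paper: the paper's proof is simply ``this follows from \ref{symquotients} and the preceding remarks'', and your argument is exactly that deduction with the implicit details made explicit. In particular, your use of the symmetrising form to identify annihilators of ideals with perpendicular submodules, so that double annihilators of $\CO$-pure ideals (such as $Az$ and $J$) return the ideals themselves, is precisely the content the paper leaves to those remarks, and your handling of it is sound.
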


\begin{proof}
This follows from \ref{symquotients} and the preceding remarks.
\end{proof}

\begin{Proposition} \label{GNeta}
Let $G$ be a finite group and $N$ a normal subgroup of $G$.
Suppose that $K$ is a splitting field for $N$. Let
$\eta\in$ $\Irr_K(N)$, and suppose that $\eta$ is $G$-stable.
If $\ON e(\eta)$ is symmetric, then $\OG e(\eta)$ is symmetric.
\end{Proposition}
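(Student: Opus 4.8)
The plan is to reduce the assertion to an application of \ref{symquotients} (Nakayama's criterion for symmetric quotients), applied to the symmetric $\CO$-algebra $A=\OG e(\eta)$, where we regard $\OG e(\eta)$ as a quotient of $\OG$ and hence as a symmetric algebra in its own right once we know it is symmetric. But in fact the cleaner route is to work with $A = \OG$ itself together with the block-type idempotent; however $e(\eta)$ need not lie in $\OG$. So instead I would argue as follows.

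First I would record that $\ON e(\eta)$, being symmetric by hypothesis, has the property (via \ref{AzpureCor}) that $\ON e(\eta)$ is $\CO$-pure in $\ON$ and that there is a symmetrising form on it. Since $\eta$ is $G$-stable, $e(\eta)$ is fixed under the conjugation action of $G$ on $Z(K\ON)$, so $e(\eta)\in Z(K\ON)^G$. Now the key structural input is that $\OG e(\eta)$ is a crossed product of $\ON e(\eta)$ with $G/N$: explicitly, $\OG e(\eta) = \bigoplus_{gN\in G/N}\, g\cdot \ON e(\eta)$, and each summand $g\cdot\ON e(\eta)$ is free of rank one as a left (and right) $\ON e(\eta)$-module, because $g e(\eta)$ is a unit in the $gN$-component. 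Thus $\OG e(\eta)$ is a $G/N$-graded algebra whose identity component is the symmetric algebra $\ON e(\eta)$.

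The main step is then to invoke the fact that a crossed product $\CB = \bigoplus_{\bar g\in \bar G}\CB_{\bar g}$ of a finite group $\bar G = G/N$ over a symmetric $\CO$-algebra $\CB_1 = \ON e(\eta)$ is again symmetric. I would prove this directly: if $s_1:\CB_1\to\CO$ is a symmetrising form for $\CB_1$, define $s:\CB\to\CO$ by $s(b)=s_1(b_1)$, where $b_1$ is the degree-$1$ component of $b\in\CB$ with respect to the $\bar G$-grading. One checks that $s$ is symmetric: for $b\in\CB_{\bar g}$ and $c\in\CB_{\bar g^{-1}}$ (the only pairs contributing to the degree-$1$ part of both $bc$ and $cb$), we have $s(bc)=s_1(bc)$ and $s(cb)=s_1(cb)$, and these agree because left multiplication by $b$ is an $\CO$-linear isomorphism $\CB_{\bar g^{-1}}\to\CB_1$ intertwining the two $\CB_1$-bimodule structures appropriately — more precisely, using that $\CB_{\bar g}$ is invertible as a $\CB_1$-bimodule and that $s_1$ is symmetric, the trace-like identity $s_1(bc)=s_1(cb)$ follows from the standard fact that for a symmetric algebra $\CB_1$ and an invertible $\CB_1$-bimodule $M$ with inverse $M'$, the pairing $M\times M'\to\CB_1\to\CO$ is symmetric in the appropriate sense. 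Finally one checks nondegeneracy: the associated map $\CB\to\CB^\vee$, $b\mapsto(x\mapsto s(bx))$, is an isomorphism because it respects the grading (suitably shifted) and restricts on each graded piece to an isomorphism $\CB_{\bar g}\xrightarrow{\sim}(\CB_{\bar g^{-1}})^\vee$ induced by $s_1$ together with the invertibility of the bimodules $\CB_{\bar g}$.

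I expect the main obstacle to be the verification that the form $s$ built from $s_1$ is genuinely symmetric and nondegenerate — i.e. checking that the symmetric structure of $\CB_1$ propagates through the crossed-product multiplication. The cleanest way to handle this is to phrase it as: an invertible $\CB_1$-$\CB_1$-bimodule $M$ over a symmetric algebra $\CB_1$ is automatically symmetric as a bimodule in the sense that $M\cong M^\vee$ compatibly, and a $\bar G$-graded algebra all of whose graded components are invertible bimodules over a symmetric identity component is symmetric. Alternatively, one can avoid the general machinery: since $e(\eta)$ is $G$-stable and $K$ is a splitting field for $N$, one has $\OG e(\eta) \cong \ON e(\eta) \otimes_{\CO} \CO_\alpha[G/N]$ as $\CO$-modules for a suitable twisted group algebra, and then symmetry of a tensor-type construction of two symmetric algebras (here one of them a twisted group algebra of a finite group, which is always symmetric) gives the result — though one must be careful that this is only a crossed product, not an honest tensor product, so the bimodule argument above is the safe path. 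Once this lemma is in place, \ref{symquotients} is not even needed: the crossed-product symmetry statement applied to $\CB_1 = \ON e(\eta)$ and $\bar G = G/N$ directly yields that $\OG e(\eta)$ is symmetric.
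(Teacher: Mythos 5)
There is a genuine gap: the lemma your whole argument rests on --- that a crossed product of a finite group over a symmetric algebra is again symmetric --- is false in general. Take $p$ odd, let $A=\CO[x]/(x^2)$ with symmetrising form $s_1(a+bx)=b$, and let $C_2=\langle t\rangle$ act on $A$ by $x\mapsto -x$. The skew group algebra $B=A\rtimes C_2$ is a crossed product whose degree-one component $A$ is symmetric, but $k\tenO B$ is the radical-square-zero self-injective algebra on the quiver with two vertices and one arrow in each direction: the socle of each indecomposable projective is the \emph{other} simple, so $k\tenO B$ is Frobenius with nontrivial Nakayama permutation and not symmetric; since symmetry is preserved by $k\tenO-$, the $\CO$-algebra $B$ is not symmetric either. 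The failure occurs exactly at the step you flag as the main obstacle: for $b\in\CB_{\bar g}$ and $c\in\CB_{\bar g^{-1}}$, writing $b=ua$ and $c=a'u^{-1}$ with $u$ a homogeneous unit gives $s(bc)=s_1(uaa'u^{-1})$ and $s(cb)=s_1(aa')$, so symmetry of your form $s$ requires $s_1$ to be invariant under the conjugation automorphisms induced by the homogeneous units. This is not automatic: $s_1$ precomposed with such an automorphism is only $z\cdot s_1$ for some $z\in Z(\CB_1)^\times$, and in the example above no choice of $s_1$ is invariant. The ``standard fact'' you invoke only gives $M^\vee\cong M^{-1}$ for each invertible bimodule separately; it does not produce one associative symmetric form on the whole crossed product. (Nondegeneracy of $s$ is indeed unproblematic --- crossed products over symmetric algebras are always Frobenius --- symmetry is precisely what can fail.)

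Note also that your argument never uses the hypothesis that $K$ splits $N$, and this is exactly what the paper's proof exploits to circumvent the invariance problem: by \ref{symquotients} applied to $\ON$, the annihilator of $\ker(\ON\to\ON e(\eta))$ is $\ON z$ for some $z\in Z(\ON)$; splitness forces $Z(\ON)e(\eta)=\CO e(\eta)$, so $z=\lambda e(\eta)$, which is $G$-stable because $\eta$ is, hence $z\in Z(\OG)$; purity of $\ON z$ in $\ON$ passes to purity of $\OG z$ in $\OG$ since $\OG$ is free as a right $\ON$-module, and \ref{symquotients} applied to $\OG$ (whose canonical symmetrising form is conjugation-invariant) concludes. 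Your decomposition $\OG e(\eta)=\bigoplus_{gN\in G/N}\,g\,\ON e(\eta)$ is correct, and your route could be salvaged if one produced a symmetrising form on $\ON e(\eta)$ stable under conjugation by the elements $ge(\eta)$; producing that invariance is where the real work lies, and it is what the paper's $G$-stable central element $\lambda e(\eta)$ accomplishes.
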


\begin{proof}
Suppose that $\ON e(\eta)$ is symmetric. By 
\ref{symquotients} there exists 
an element $z\in$ $Z(\ON)$ such that $\ON z$ is the
annihilator of the kernel of the map $\ON \to$ $\ON e(\eta)$.
Thus $z\in$ $Z(\ON)e(\eta)=$ $\CO e(\eta)$, where we use that
$K$ is large enough for $N$. In particular,
there is $\lambda\in$ $\CO$ such that $z=$ $\lambda e(\eta)$,
and hence $z\in$ $Z(\OG)$.
Since $\OG$ is free as a right $\ON$-module and $\ON z$ is
$\CO$-pure in $\ON$, it follows that
$\OG z$ is $\CO$-pure in $\OG$. The annihilator of the
kernel of the map $\OG\to$ $\OG e(\eta)$ is therefore equal 
to $\OG z$. The result follows from \ref{symquotients}.
\end{proof}

\begin{proof}[Proof of Proposition \ref{centraltype}]
Since $\chi$ is of central type, there is a unique
(hence $G$-stable) linear character $\zeta : Z(G)\to$ $\CO^\times$
such that $e(\chi)=$ $e(\zeta)$. We have $\CO Z(G)e(\zeta)\cong$ 
$\CO$, which is trivially symmetric, and hence $\OG e(\zeta)=$
$\OG e(\chi)$ is symmetric by \ref{GNeta}.
\end{proof}

The argument in the proof of \ref{GNeta} to describe a central element 
$z$ which generates a pure ideal admits the following generalisation.

\begin{Proposition} \label{Aechisymm}
Let $A$ be a symmetric $\CO$-algebra such that $K\tenO A$ is 
semisimple. Let $\chi\in$ $\Irr_K(A)$ be the character of an
absolutely simple $K\tenO A$-module, and denote by $e(\chi)$ the
corresponding primitive idempotent in $Z(K\tenO A)$. Let $\lambda\in$  
$\CO$ be an element having minimal valuation such that 
$\lambda e(\chi)\in$ $A$, where we identify $A$ with its image $1\ten A$
in $K\tenO A$. The following are equivalent.

\smallskip\noindent (i)
The $\CO$-algebra $Ae(\chi)$ is symmetric.

\smallskip\noindent (ii)
The $A$-module $A\lambda e(\chi)$ is $\CO$-pure in $A$.

\smallskip\noindent (iii)
We have $A\lambda e(\chi)=$ $A\cap (K\tenO A)e(\chi)$.
\end{Proposition}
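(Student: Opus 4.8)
The plan is to exhibit $Ae(\chi)$ as a quotient $A/I$ by an explicit $\CO$-pure ideal $I$, and then to show that the annihilator of $I$ is, up to a unit of $\CO$, generated by the canonical central element $z := \lambda e(\chi)$. First note that $z\in Z(A)$: it lies in $A$ by the choice of $\lambda$, and it commutes with $A$ because the idempotent $e(\chi)$ is central in $K\tenO A$. Hence $A\lambda e(\chi) = Az$ is an ideal of $A$, and since $\lambda$ is a unit in $K$ we have $K\tenO(Az) = (K\tenO A)e(\chi)$. The equivalence of~(ii) and~(iii) is then immediate from Lemma~\ref{OpureLemma}: by part~(v) of that lemma, $Az$ is $\CO$-pure in $A$ if and only if $Az = A\cap(K\tenO A)e(\chi)$.

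Next, $Ae(\chi)$ is by definition the image of $A$ under the composite $A\hookrightarrow K\tenO A\to(K\tenO A)e(\chi)$, so $Ae(\chi)\cong A/I$ with $I$ the kernel of this map; since $a\lambda e(\chi)=0$ is equivalent to $ae(\chi)=0$ for $a\in A$, we have $I=\ann(z)$, and one checks directly that $I = A\cap(K\tenO A)(1-e(\chi))$. As $Ae(\chi)$ is a finitely generated $\CO$-submodule of the $K$-vector space $(K\tenO A)e(\chi)$, it is $\CO$-free, and hence $I$ is $\CO$-pure in $A$ by Lemma~\ref{OpureLemma}. With this in hand, the implication (ii)$\Rightarrow$(i) is precisely Corollary~\ref{AzpureCor}: if $Az$ is $\CO$-pure, then $Ae(\chi)\cong A/\ann(z)$ is symmetric.

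The substantive point is (i)$\Rightarrow$(ii). Assume $Ae(\chi)\cong A/I$ is symmetric; since $I$ is $\CO$-pure, Proposition~\ref{symquotients} gives an element $z'\in Z(A)$ with $\ann(I)=Az'$, and by the remark following that proposition $Az'$ is $\CO$-pure. One inclusion is automatic: $zI=0$ because $I=\ann(z)$ and $z$ is central, so $z\in\ann(I)=Az'$ and $Az\subseteq Az'$. For the reverse inclusion one must pin down $z'$. The element $\lambda(1-e(\chi)) = \lambda\cdot 1_A - z$ lies in $A$, and since it kills $e(\chi)$ it lies in $I$; as $z'$ annihilates $I$ we get $\lambda z'(1-e(\chi))=0$, hence $z'(1-e(\chi))=0$ in $K\tenO A$ because $\lambda$ is a unit in $K$, that is, $z'=z'e(\chi)$. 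Thus $z'$ is a central element of $K\tenO A$ lying in the block $(K\tenO A)e(\chi)$; since $\chi$ is absolutely simple this block is a matrix algebra over $K$ with centre $Ke(\chi)$, so $z'=\mu e(\chi)$ for some $\mu\in K$, and $\mu\neq 0$ because $0\neq z\in Az'$. Finally $\mu e(\chi)=z'\in A$, and a short argument shows that the $\CO$-submodule $\{\kappa\in K : \kappa e(\chi)\in A\}$ of $K$ is generated by $\lambda$ (it is a nonzero proper $\CO$-submodule of $K$, and $A$ cannot contain an idempotent in $J(\CO)A$), so $\mu/\lambda\in\CO$ and therefore $z' = (\mu/\lambda)z\in Az$. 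Hence $Az=Az'=\ann(I)$ is $\CO$-pure.

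The only step beyond routine manipulation with Lemma~\ref{OpureLemma}, Proposition~\ref{symquotients} and Corollary~\ref{AzpureCor} is the identification, in the last paragraph, of the abstract central generator $z'$ of $\ann(I)$ with a scalar multiple of $\lambda e(\chi)$. This combines three observations: that $\lambda(1-e(\chi))\in I$ forces $z'$ into the block $(K\tenO A)e(\chi)$; that absolute simplicity of $\chi$ makes the centre of that block one-dimensional over $K$; and that the minimality of the valuation of $\lambda$ turns the resulting scalar into an element of $\CO$. I expect this to be the main obstacle, the remainder being bookkeeping.
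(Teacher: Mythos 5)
Your proof is correct and follows essentially the same route as the paper: both rest on Proposition~\ref{symquotients}/Corollary~\ref{AzpureCor}, the fact that absolute simplicity of $\chi$ forces any central generator of $\ann(I)$ to lie in $Ke(\chi)$, and the minimality of the valuation of $\lambda$ to land back in $\CO\lambda e(\chi)$. The only cosmetic difference is that the paper identifies $\ann(I)=A\cap(K\tenO A)e(\chi)$ directly from the product decomposition of $K\tenO A$, whereas you locate the central generator $z'$ via the element $\lambda(1-e(\chi))\in I$; the substance is the same.
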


\begin{proof}
The equivalence of (ii) and (iii) is a general fact 
(cf. \ref{OpureLemma}).
Let $I$ be the kernel of the algebra homomorphism $A\to$ $Ae(\chi)$
sending $a\in$ $A$ to $a e(\chi)$. Multiplication by $e(\chi)$ in
$K\tenO A$ yields the projection of $K\tenO A=$ 
$\prod_{\psi\in\Irr_K(A)}\ (K\tenO A)e(\psi)$ onto the factor 
$(K\tenO A)e(\chi)$; this is a matrix algebra as $\chi$ is the
character of an absolutely simple $K\tenO A$-module. Thus $I$ is the 
$\CO$-pure ideal corresponding to the complement of $\{\chi\}$ in $\Irr_K(A)$; 
that is, $I=$ $A\cap \prod_{\psi}\ (K\tenO A) e(\psi)$, where $\psi$
runs over the set $\Irr_K(A)-\{\chi\}$.
It follows that the annihilator of $I$ is equal
to $A\cap (K\tenO A)e(\chi)$, and hence $Z(A)\cap (K\tenO A)e(\chi)\subseteq$
$\CO e(\chi)$. Thus $A/I$ is symmetric if and only if $Az$ is
$\CO$-pure for some $z\in$ $A\cap \CO e(\chi)$, hence if and only if
$A\lambda e(\chi)$ is $\CO$-pure for some $\lambda\in$ $\CO$.
In that case, $\lambda$ has necessarily the smallest possible 
valuation such that $\lambda e(\chi)\in$ $A$. The result follows.
\end{proof}

\begin{Remark} \label{AEndRemark}
Let $A$ be an $\CO$-algebra which is finitely generated free as an
$\CO$-module, and let $I$ be an $\CO$-pure ideal in $A$. Then the image 
of the canonical map $A\to$ $\End_\CO(A/I)$ sending $a\in$ $A$ to left 
multiplication by $a+I$ in $A/I$ has kernel $I$, hence image isomorphic 
to $A/I$. Thus any $\CO$-free algebra quotient of $A$ is isomorphic to 
the image of the structural homomorphism
$A \to \End_\CO(V)$
sending $a\in$ $A$ to left multiplication by $a$ on $V$, for some 
$A$-module $V$ which is free of finite rank as an $\CO$-module.
Since $V$ is $\CO$-free, this image is isomorphic to the canonical
image of $A$ in $\End_K(K\tenO V)$. Thus, if $K\tenO V$ is
a semisimple $K\tenO A$-module, then this image depends only on
the isomorphism classes of simple $K\tenO A$-modules occurring in
a decomposition of $K\tenO V$, but not on the multiplicity of
the simple factors of $K\tenO V$.
\end{Remark}

\begin{Remark} \label{MoritaRemark}
It follows from the formal properties of Morita equivalences that
a Morita equivalence between two algebras induces a bijection
between quotients of the two algebras, in such a way that
quotients corresponding to each other are again Morita equivalent.
In particular, symmetric quotients are preserved under Morita 
equivalences. We describe this briefly for the convenience of
the reader. Let  $A$, $B$ be Morita equivalent 
$\CO$-algebras; that is, there is an $A$-$B$-bimodule $M$ and a 
$B$-$A$-bimodule $N$ such that $M$, $N$ are finitely generated 
projective as left and right modules, and such that we have isomorphisms
of bimodules $M\tenB N\cong$ $A$ and $N\tenA M\cong$ $B$. The functor
$M\tenB -$ induces an equivalence between the categories of
$B$-$B$-bimodules and of $A$-$B$-bimodules, sending $B$ to $M$.
Thus this functor induces a bijection between ideals in $B$ and
subbimodules of $M$, sending an ideal $J$ in $B$ to the subbimodule
$MJ$. Note that since $M$ is finitely generated projective as a 
right $B$-module, we have $MJ\cong$ $M\tenB J$. Similarly, we have
a bijection between ideals in $A$ and subbimodules in $M$ sending
an ideal in $I$ in $A$ to the subbimodule $IM\cong$ $I\tenA M$. 
Combining these bijections yields a bijection between ideals in $A$ and 
ideals in $B$, with the property that the ideal $I$ in $A$ corresponds 
to the ideal $J$ in $B$ if and only if $IM=$ $MJ$, which in turn holds 
if and only if $JN=$ $NI$. If $I$, $J$ correspond to each other through
this bijection, then $M/IM$ and $N/JN$ induce a Morita equivalence
between $A/I $ and $B/J$. Indeed, $M/IM$ is finitely generated
projective as a left $A/I$-module, since $M$ is finitely generated as a 
left $A$-module. Using $IM=$ $MJ$, this argument shows that $M/IM$ is 
finitely generated projective as a right $B/J$-module. Similarly, $N/JN$ 
is finitely generated projective as a left and right module. Moreover, 
we have $M/IM\ten_{B/J} N/JN\cong$ $M/MJ\tenB N/JN\cong$ 
$M\tenB N/JN\cong$ $M\tenB N/M\tenB JN\cong$ $M\tenB N/M\tenB NI\cong$ 
$A/I$. The same argument with reversed roles yields 
$N/NI\ten_{A/I} M/IM\cong$ $B/J$. Since Morita equivalences preserve the 
property of being symmetric, this shows that $A/I$ is symmetric if and 
only if $B/J$ is symmetric.
\end{Remark}

%%%%%%%%%%%%%%%%%%%%%%%%%%%%%%%%%%%%%%%%%%%%%%%
\section{Proof of Proposition \ref{cyclicindexp}} \label{cyclicindexpSection}

Let $G$ be a finite $p$-group having a cyclic normal subgroup $H$ of 
index $p$, and let $\chi\in$ $\Irr_K(G)$. 
In order to prove Proposition \ref{cyclicindexp} we may assume
that $G$ is nonabelian, hence $H$ has order at least $p^2$. 

\medskip
Suppose first that $p$ is odd. Then the automorphism of $H$ induced by
an element $t\in$ $G-H$ acts trivially on the subgroup $H^p$ of
index $p$ in $H$, hence $Z(G)=$ $H^p$ has index $p^2$ in $G$.
Any nonlinear character of $G$ has degree $p$, hence is a
character of central type. It follows from \ref{centraltype} 
that  $\OG e(\chi)$ is symmetric.

\medskip
Suppose now that $p=$ $2$. The previous arguments 
remain valid so long as the action of $G$ on the cyclic normal
$2$-subgroup $H$ of index $2$ is trivial on the subgroup $H^2$
of index $2$ in $H$. This includes the case of 
semidihedral $2$-groups (where $t$ is an  involution which acts on
the cyclic subgroup $H$ of order $2^n$ by sending a generator $s$ 
of $H$ to $s^{1+2^{n-1}}$). If $n=2$, then
$|G|=$ $8$, hence $\chi$ is a character of central type, and so the 
symmetry of $\OG e(\chi)$ follows from \ref{centraltype}. 
Suppose that $n\geq 3$ and that $G$ does not act trivially on the subgroup 
of index $2$ in $H$.  Then $G$ is  dihedral or generalised
quaternion (corresponding in both cases to the action of an element $t \in$ 
$G-H$  of order either $2$ or $4$ on $H$  sending $s$ to $s^{-1}$) or 
quasidihedral  (corresponding to the action of $t$ sending $s$ to 
$s^{-1+2^{n-1}}$).
We will need the following elementary facts (a proof is included
for the convenience of the reader). 

\begin{Lemma} \label{zetaazetab}
Let $n\geq$ $3$ and let $\zeta$ be a primitive $2^n$-th root of unity in $\CO$.
Let $a$, $b\in$ $\Z$ such that $b-a$ is even. The following hold.

\smallskip\noindent  (i) 
The numbers $\zeta^a\pm \zeta^b$ are divisible by $(1-\zeta)^2$  in $\CO$.

\smallskip\noindent  (ii) 
The numbers $\frac{\zeta\pm \zeta^{-1}}{(1-\zeta)^2}$ are invertible in $\CO$.
\end{Lemma}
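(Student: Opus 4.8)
The plan is to reduce everything to one elementary fact: in $\CO$, the element $1-\zeta^j$ is an associate of $1-\zeta$ for every odd integer $j$ (equivalently, every $j$ prime to $2^n$). I would prove this by noting that $\frac{1-\zeta^j}{1-\zeta}=1+\zeta+\cdots+\zeta^{j-1}$ lies in $\Z[\zeta]\subseteq\CO$, and, picking $j'$ with $jj'\equiv 1\pmod{2^n}$, that its inverse $\frac{1-\zeta}{1-\zeta^j}=\frac{1-\zeta^{jj'}}{1-\zeta^j}=1+\zeta^j+\cdots+\zeta^{j(j'-1)}$ also lies in $\Z[\zeta]\subseteq\CO$; hence both are units. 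Applying this with $j=1+2^{n-1}$ (odd since $n\ge 2$) shows that $1+\zeta=1-\zeta^{1+2^{n-1}}$ is an associate of $1-\zeta$, and therefore $1-\zeta^2=(1-\zeta)(1+\zeta)$ is an associate of $(1-\zeta)^2$. The identical argument with $2^{n-1}$ in place of $2^n$ shows that $1-(\zeta^2)^k$ is an associate of $1-\zeta^2$ for every odd $k$, since $\zeta^2$ is a primitive $2^{n-1}$-th root of unity.

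For part (i), I would write $b-a=2m$ and use $-1=\zeta^{2^{n-1}}=(\zeta^2)^{2^{n-2}}$ to get
\[
\zeta^a-\zeta^b=\zeta^a\bigl(1-(\zeta^2)^m\bigr),\qquad
\zeta^a+\zeta^b=\zeta^a\bigl(1-(\zeta^2)^{m+2^{n-2}}\bigr).
\]
Since $1-\xi$ divides $1-\xi^r$ in $\CO$ for any root of unity $\xi\in\CO$ and any $r\in\Z$, both right-hand factors in parentheses are divisible by $1-\zeta^2$, hence both $\zeta^a\pm\zeta^b$ are divisible by $1-\zeta^2$, which by the first paragraph is an associate of $(1-\zeta)^2$. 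This gives (i).

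For part (ii), applying (i) with $a=1$, $b=-1$ already shows $\zeta\pm\zeta^{-1}\in(1-\zeta)^2\CO$, so it only remains to see that the quotients are units, i.e. that $\zeta\pm\zeta^{-1}$ are \emph{associates} of $(1-\zeta)^2$. For the minus sign this is immediate: $\zeta-\zeta^{-1}=-\zeta^{-1}(1-\zeta^2)$, and $1-\zeta^2$ is an associate of $(1-\zeta)^2$. For the plus sign, $\zeta+\zeta^{-1}=\zeta^{-1}(1+\zeta^2)$ and $1+\zeta^2=1-\zeta^{2+2^{n-1}}=1-(\zeta^2)^{1+2^{n-2}}$; here $1+2^{n-2}$ is odd precisely because $n\ge 3$, so $(\zeta^2)^{1+2^{n-2}}$ is again a primitive $2^{n-1}$-th root of unity and $1-(\zeta^2)^{1+2^{n-2}}$ is an associate of $1-\zeta^2$, hence of $(1-\zeta)^2$. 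Thus $\frac{\zeta+\zeta^{-1}}{(1-\zeta)^2}$ is a unit.

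I do not expect a serious obstacle: the whole argument is a chain of associate-relations among cyclotomic integers. The only point requiring genuine care is the exponent bookkeeping needed to recognise the relevant powers of $\zeta^2$ as \emph{odd} powers, so that the "associate of $1-\zeta$" reduction applies — and this is exactly where $n\ge 3$ is used, and must be used, since for $n=2$ one has $\zeta+\zeta^{-1}=0$ and (ii) fails.
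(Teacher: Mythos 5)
Your proof is correct, and it takes a genuinely different route from the paper's. The paper's argument hinges on the observation that $2$ is divisible by $(1-\zeta)^3$ when $n\geq 3$, so that $\frac{2}{(1-\zeta)^2}\in J(\CO)$; this is used to pass between the $+$ and $-$ cases in both (i) and (ii), after which (i) is settled by the direct expansion $1+\zeta^{2c}=(1-\zeta^c)(1-\zeta^c+2\zeta^c)+2\zeta^{2c}$ and (ii) by noting that $\frac{1-\zeta^2}{(1-\zeta)^2}=1+\frac{2\zeta}{1-\zeta}$ is a unit. You avoid any appeal to the valuation of $2$ and work instead with the standard fact that $1-\zeta^j$ is an associate of $1-\zeta$ for $j$ odd (your unit-index argument for this is the usual one and is fine), together with the trick of absorbing signs into exponents via $-1=\zeta^{2^{n-1}}$: this turns each of $\zeta^a\pm\zeta^b$ into $\zeta^a\bigl(1-(\zeta^2)^r\bigr)$ for a suitable integer $r$, giving (i), and for (ii) it yields the slightly stronger statement that $\zeta\pm\zeta^{-1}$ are actual associates of $(1-\zeta)^2$, with the hypothesis $n\geq 3$ entering exactly where it must, namely to make $1+2^{n-2}$ odd. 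Your route is arguably more structural and makes the associate classes explicit; the paper's is more self-contained in that it needs only the single valuation fact about $2$ rather than the cyclotomic-unit lemma.
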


\begin{proof}
The integer $2$ is divisible by $(1-\zeta)^3$ in $\CO$, as $n\geq 3$. In 
particular, $\frac{2}{(1-\zeta)^2}\in$ $J(\CO)$, and hence, in order to prove
(i),  it suffices to  show that $\zeta^a+\zeta^b$ is divisible by $(1-\zeta)^2$. 
Write $b-a=$ $2c$ for some integer $c$. Then $\zeta^a+\zeta^b=$ 
$\zeta^a(1+\zeta^{b-a})=$ $\zeta^a(1+\zeta^{2c})$. Thus we may 
assume $a=$ $0$ and $b=$ $2c$. We have $1+\zeta^{2c}=$ 
$1-\zeta^{2c}+2\zeta^{2c}=$ $(1-\zeta^c)(1+\zeta^c)+2\zeta^{2c}=$ 
$(1-\zeta^c)(1-\zeta^c+2\zeta^c)+2\zeta^{2c}$, wich is divisible by 
$(1-\zeta)^2$  because $1-\zeta$ divides $1-\zeta^c$ and 
$(1-\zeta)^2$ divides $2$. This shows (i).   Again, since  $\frac{2}{(1-\zeta)^2}\in$ $J(\CO)$, in order to prove (ii), 
it suffices to prove that   $\frac{\zeta - \zeta^{-1}}{(1-\zeta)^2}$  is invertible in $\CO$.   Observe that 
$\frac{1-\zeta^2}{(1-\zeta)^2}=$ $\frac{1+\zeta}{1-\zeta}=$
$1+\frac{2\zeta}{1-\zeta}$ is invertible in $\CO$. Multiplying this by
$\zeta^{-1}$ shows (ii).
\end{proof}

\medskip
We complete the proof of \ref{cyclicindexp}. 
Let $G$ be a  dihedral or generalised  quaternion group of order $2^{n+1}$. 
In order to show that $\OG e(\chi)$ is 
symmetric, we may assume that  $\chi(1)>1$, hence $\chi(1)=$ $2$. Let 
$H$ be the cyclic subgroup of order $2^n$ of $G$.
Since $\chi(1)=$ $2$ we have $\chi=$ $\Ind^G_H(\eta)$ for some
nontrivial linear character $\eta\in$ $\Irr_K(H)$. Arguing by induction, 
we may assume that $\eta$ is faithful.  Let $\zeta$ be a root of
unity of order $2^n$ and let $s$ be a generator of $H$ such that 
$\eta(s)=$ $\zeta$. Let $\bar\eta$ be the character of $H$ sending
$s$ to $\zeta^{-1}$. Then $\Res^G_H(\chi)=$ $\eta+\bar\eta$, and
$\chi$ vanishes outside $H$. In particular, 
$$e(\chi)=e(\eta)+e(\bar\eta) = \frac{1}{2^n}\sum_{a=0}^{2^n-1}
(\zeta^a+\zeta^{-a})s^a\ .$$ 
By \ref{zetaazetab} the  coefficients $\zeta^a+\zeta^{-a}$ in this sum 
are all divisible by $(1-\zeta)^2$.
Set $\lambda=$ $\frac{2^n}{(1-\zeta)^2}$, and set $z=$
$\lambda e(\chi)$. By    \ref{zetaazetab}(i)  we have $z\in$ $\OG$ and by 
\ref{zetaazetab}(ii)   we have that $\lambda $ has minimal valuation such that $\lambda e(\chi)\in \OG $.
Thus, by  \ref{Aechisymm}, it suffices to show that $\OG z$ is $\CO$-pure in $\OG$.
Since $e(\chi)\in$ $\OH$, and hence $z\in$ $\OH$, 
it suffices to show that $\OH z$ is $\CO$-pure in $\OH$.
For any $a$ such that $0\leq a\leq$ $2^n-1$ we have
$$s^a z= \lambda (s^ae(\eta)+s^a e(\bar\eta))=
\lambda (\zeta^ae(\eta)+\zeta^{-a}e(\bar\eta))\ .$$
We claim that the set $\{ z, 2^n e(\eta)\}$ is an
$\CO$-basis of $\OH z$. Note first that by \ref{Aechisymm}
$$sz-\zeta^{-1}z=\zeta^{-1}(\zeta^2-1)\lambda e(\eta)=
\mu 2^n e(\eta)$$
for some $\mu\in$ $\CO^{\times}$. Thus both $z$ and $2^n e(\eta)$ belong to $\OH z$.
We need to show that any of the
elements $s^a z$ with $a$ as before is an $\CO$-linear
combination of $z$ and $2^n e(\eta)$. We have
$$s^az-\zeta^{-a}z=\lambda\zeta^{-a}(\zeta^{2a})e(\eta)=
\nu 2^n e(\eta)$$
for some $\nu\in$ $\CO$, whence the claim.
It remains to show that $\OH z$ is $\CO$-pure in $\OH$.
Let $u=$ $\sum_{a=0}^{2^n-1}  \mu_a s^a$ be an element in $\OH z$
such that all coefficients $\mu_a\in$ $\CO$ are divisible by $1-\zeta$.
Write $u=$ $\beta z + \gamma 2^n e(\eta)$ with $\beta$, $\gamma\in$ $\CO$.
We need to show that $\beta$, $\gamma$ are divisible by $1-\zeta$.
Comparing coefficients for the two expressions of $u$ above yields
$$\mu_a = \beta \frac{\zeta^a+\zeta^{-a}}{(1-\zeta)^2} + \gamma \zeta^a$$
for $0\leq a\leq 2^n-1$. If $a=$ $2^{n-2}$, then $\zeta^a+\zeta^{-a}=$ $0$,
hence $\mu_a=$ $\gamma\zeta^a$, which implies that $\gamma$ is divisible
by $1-\zeta$, as $\mu_a$ is divisible by $1-\zeta$. 
We consider the above equation for $a=$ $1$, which reads
$$\mu_1 = \beta \frac{\zeta+\zeta^{-1}}{(1-\zeta)^2} + \gamma \zeta\ .$$
By \ref{zetaazetab} we have
$\frac{\zeta+\zeta^{-1}}{(1-\zeta)^2}\in$ $\CO^\times$.
Since $\mu_1$ and $\gamma$ are divisible by $1-\zeta$, this implies that
$\beta$ is divisible by $1-\zeta$. Thus $\OH z$ is $\CO$-pure in $\OH$,
and hence $\OG e(\chi)$ is symmetric. 

\medskip
Let finally $G$ be quasidihedral; that is, conjugation by the involution $t$ 
sends $s$ to $s^{-1+{2^{n-1}}}$. The calculations are similar to the 
previous case; we sketch the modifications.  Let $\bar\eta$ be the 
character of $H$  sending $s$ to $\zeta^{-1+2^{n-1}}=$ $-\zeta^{-1}$. 
Then $e(\chi)=$ $e(\eta)+e(\bar\eta)=$ 
$\sum_{a=0}^{2^n-1}\ (\zeta^a+(-1)^a\zeta^{-a})$.
As before, setting $z=$ $\frac{2^{n-1}}{(1-\zeta)^2} e(\chi)$, it suffices 
to  show that $\OH z$ is $\CO$-pure in $\OH$. One 
verifies as before,  that $\{ z,2^ne(\eta)\}$ is an $\CO$-basis of $\OH z$.
Let $u=$ $\sum_{a=0}^{2^n-1}  \mu_a s^a$ be an element in $\OH z$
such that all coefficients $\mu_a\in$ $\CO$ are divisible by $1-\zeta$.
Write $u=$ $\beta z + \gamma 2^n e(\eta)$ with $\beta$, $\gamma\in$ 
$\CO$. We need to show that $\beta$, $\gamma$ are divisible by 
$1-\zeta$. Comparing coefficients yields $\mu_a=$ 
$\beta\frac{\zeta^a+(-1)^a\zeta^{-a}}{(1-\zeta)^2}+$ $\gamma\zeta^a$.
If $a=$ $2^{n-2}$, then $a$ is even and
$\zeta^a+\zeta^{-a}=$ $0$, implying $\mu_a=$ $\gamma\zeta^a$, which
in turn implies that $\gamma$ is divisible by $1-\zeta$. Comparing coefficients
for $a=$ $1$ yields that $\beta$ is divisible by $1-\zeta$. Thus $\OH z$ is 
$\CO$-pure in $\OH$, and hence $\OG e(\chi)$ is symmetric. 
 This completes the proof of \ref{cyclicindexp}.

%%%%%%%%%%%%%%%%%%%%%%%%%%%%%%%%%%%%%%%%%%%%%%%%%%%%%%%%%%%%%%%%%%%%%%%%%%
\section{On symmetric subalgebras of matrix algebras}

Let $G$ be a finite group. By the  remarks following \ref{OpureLemma}, 
the $\CO$-free $\CO$-algebra quotients of $\OG$ correspond to $\CO$-pure 
ideals, hence to subsets of $\Irr_K(G)$, and  by  \ref{AEndRemark} any 
$\CO$-free quotient of $\OG$ is the image of a structural map 
$\alpha : \OG \to $ $ \End_\CO(V)$ for some finitely generated $\CO$-free 
$\OG$-module $V$.  Moreover, the kernel of this map depends only on the 
set of irreducible characters of $G$ arising as constituents of the 
character of $V$. If the character $\chi$ of $V$ is irreducible, then 
the image of $\alpha$ is isomorphic to the algebra $\OG e(\chi)$, where
$e(\chi)=$ $\frac{\chi(1)}{|G|}\sum_{x\in G} \chi(x^{-1}) x$.
In that case, $\Im(\alpha)\cong$ $\OG e(\chi)$ has the same $\CO$-rank 
$\chi(1)^2$ as $\End_\CO(V)$. If $\chi\in$ $\Irr_K(G)$ has degree one, 
then $\OG e(\chi)\cong$ $\CO$ is trivially symmetric. If $\chi$ has 
defect zero, then $\OG e(\chi)$ is a matrix algebra over $\CO$, hence 
also symmetric. Both examples are special cases of the following 
well-known situation:

\begin{Proposition} \label{OGchimatrix}
Let $G$ be a finite group, $K$ a splitting field for $G$, and 
$\chi\in$ $\Irr_K(G)$. The algebra $\OG e(\chi)$ is isomorphic
to a matrix algebra over $\CO$ if and only if $\chi$ lifts
an irreducible Brauer character.
\end{Proposition}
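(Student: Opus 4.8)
The plan is to transfer the question to the residue field $k$ and then close the gap with Nakayama's lemma. Set $n=\chi(1)$, let $V$ be an $\CO$-free $\OG$-module affording $\chi$, and write $A=\OG e(\chi)$. As recalled in the remarks preceding the statement, $A$ is $\CO$-free of rank $n^2$, the structural map $\OG\to\End_\CO(V)$ has image isomorphic to $A$, so we may regard $A$ as a subalgebra of $\End_\CO(V)\cong M_n(\CO)$, and $K\tenO A\cong KGe(\chi)\cong M_n(K)$ since $K$ is a splitting field. Put $\bar V=k\tenO V$, an $n$-dimensional module over $\bar A=k\tenO A$. Tensoring the inclusion $A\subseteq\End_\CO(V)$ with $k$ and using the canonical identification $\End_\CO(V)\tenO k\cong\End_k(\bar V)$ gives an algebra homomorphism $\bar A\to\End_k(\bar V)$ whose image is precisely the image of the action map $kG\to\End_k(\bar V)$.

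The first ingredient is the equivalence: $\chi$ lifts an irreducible Brauer character if and only if $\bar V$ is a simple $kG$-module. Indeed, the Brauer character of $\bar V$ is $d_G(\chi)=\sum_\varphi d^\chi_\varphi\,\varphi$, i.e. $d^\chi_\varphi$ is the multiplicity of the simple $kG$-module with Brauer character $\varphi$ as a composition factor of $\bar V$; since the irreducible Brauer characters are linearly independent, $d_G(\chi)$ equals a single $\varphi\in\IBr_k(G)$ exactly when $\bar V$ has a unique composition factor occurring with multiplicity one, that is, when $\bar V$ is simple (in which case $\chi$ lifts the Brauer character of $\bar V$).

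Now the two directions. For ``if'': assume $\chi$ lifts an irreducible Brauer character, so $\bar V$ is simple. As $k$ is a splitting field, the density theorem gives that $kG\to\End_k(\bar V)$ is surjective, hence the image of $\bar A$ in $\End_k(\bar V)=\End_\CO(V)/J(\CO)\End_\CO(V)$ is everything, i.e. $A+J(\CO)\End_\CO(V)=\End_\CO(V)$; since $\End_\CO(V)$ is finitely generated over the local ring $\CO$, Nakayama's lemma forces $A=\End_\CO(V)\cong M_n(\CO)$. For ``only if'': assume $A\cong M_m(\CO)$ for some $m$. Then $M_m(K)\cong K\tenO A\cong M_n(K)$ forces $m=n$, so $\bar A=k\tenO A\cong M_n(k)$; consequently $\bar V$, being an $n$-dimensional module over $M_n(k)$, is isomorphic to the simple $\bar A$-module, in particular $\bar V$ is a simple $kG$-module, and by the equivalence above $\chi$ lifts an irreducible Brauer character.

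The point requiring the most care is the bookkeeping in the first paragraph: that $A$ genuinely sits inside $\End_\CO(V)$ as a full-rank $\CO$-sublattice, and that reduction modulo $J(\CO)$ identifies the image of $\bar A$ in $\End_k(\bar V)$ with the image of $kG$. Both are furnished by the remarks preceding the statement together with right-exactness of $k\tenO-$. Everything else is standard: the interpretation of decomposition numbers as composition multiplicities in $\bar V$, the density theorem over the splitting field $k$, a dimension count over $K$, and Nakayama's lemma.
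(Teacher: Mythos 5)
Your proof is correct and follows essentially the same route as the paper: the equivalence ``$\chi$ lifts an irreducible Brauer character $\Leftrightarrow$ $k\tenO V$ is simple,'' the density theorem over the splitting field $k$, and Nakayama's lemma applied to the structural map $\OG\to\End_\CO(V)$. The only difference is that you make explicit the converse step (a matrix-algebra quotient $\OG e(\chi)\cong M_n(\CO)$ forces $k\tenO V$ to be simple, via reduction modulo $J(\CO)$ and the fact that every $n$-dimensional module over $M_n(k)$ is simple), which the paper compresses into ``the result follows''; this is a worthwhile clarification, not a change of method.
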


\begin{proof}
We include a proof for the convenience of the reader.
Let $X$ be an $\CO$-free $\OG$-module with character $\chi$.
The character $\chi$ lifts an irreducible Brauer character
if and only if $k\tenO X$ is a simple $kG$-module.
The $kG$-module $k\tenO X$ is simple if and only if
the structural map $kG\to$ $\End_k(k\tenO X)$ is surjective.
By Nakayama's Lemma, this is the case if and only if the
structural map $\OG\to$ $\End_\CO(X)$ is surjective.
The result follows.
\end{proof}

If $\OG e(\chi)$ is symmetric but not a matrix algebra, then the 
following observation narrows down the possible symmetrising forms.

\begin{Proposition} \label{EndOVsubalgebras}
Let $V$ be a free $\CO$-module of finite rank $n$, and let $A$ be
a symmetric subalgebra of $\End_\CO(V)$ of rank $n^2$. Then
$Z(A)\cong$ $\CO$, and there is an integer $r\geq$ $0$ such that 
the restriction to $A$ of the map $\pi^{-r}\tr_V : \End_\CO(V)\to$ 
$K$ sends $A$ to $\CO$ and induces a symmetrising form on $A$.
Moreover, $r$ is the smallest nonnegative integer satisfying
$\pi^r\End_\CO(V)\subseteq$ $A$.
\end{Proposition}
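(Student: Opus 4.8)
The plan is to analyze the situation via base change to $K$ together with the structure theory of symmetric forms on a full-rank subalgebra of $\End_\CO(V)$. First I would observe that since $A$ has $\CO$-rank $n^2$, the inclusion $A\hookrightarrow\End_\CO(V)$ becomes an isomorphism after applying $K\tenO-$, so $K\tenO A\cong\End_K(K\tenO V)$ is a simple $K$-algebra. Consequently $K\tenO Z(A)=Z(K\tenO A)\cong K$, and since $Z(A)$ is $\CO$-free of rank $1$ (being an $\CO$-submodule of a rank-one $K$-space that contains $1_A$), we get $Z(A)\cong\CO$. This also shows that a symmetrising form on $A$ is unique up to multiplication by a unit of $Z(A)^\times=\CO^\times$, so there is essentially only one form to identify.

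Next I would compare the symmetrising form $s$ of $A$ with the reduced trace. Extend $s$ $K$-linearly to $K\tenO A=\End_K(K\tenO V)$; this is a nonzero symmetric $K$-linear form on a simple $K$-algebra, hence a scalar multiple of the (reduced) trace form $\tr$. Thus $s=c\cdot\tr_V|_A$ for some $c\in K^\times$. Writing $c=\pi^{-r}v$ with $v\in\CO^\times$ and $r\in\Z$, and absorbing $v$ into $s$ (legitimate, since $v\in\CO^\times\subseteq Z(A)^\times$), we may assume $s=\pi^{-r}\tr_V|_A$. Because $s$ is a symmetrising form for $A$ over $\CO$, we have $s(A)=\CO$ (as noted in \S\ref{backgroundSection} for symmetric $\CO$-algebras, since $\CO$ is a DVR); in particular $\pi^{-r}\tr_V(A)\subseteq\CO$, which already forces $r\geq 0$ once we check $\tr_V(A)\not\subseteq\pi\CO$ — and that follows because $\tr_V(1_A)=n$ need not help directly, so instead I use that if $\pi^{-r}\tr_V|_A$ takes values in $\pi\CO$ then it is not surjective onto $\CO$, contradicting $s(A)=\CO$; hence $r$ is exactly the unique integer with $\pi^{-r}\tr_V(A)=\CO$, and it is $\geq 0$.

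Finally I would identify $r$ with the smallest nonnegative integer $m$ such that $\pi^m\End_\CO(V)\subseteq A$. For this, the key point is the nondegeneracy of the trace pairing: the form $(f,g)\mapsto\tr_V(fg)$ on $\End_\CO(V)$ is a perfect $\CO$-bilinear pairing (the trace form is unimodular on a matrix algebra over $\CO$). Under this pairing, the dual of the full-rank sublattice $A$ inside $\End_\CO(V)$ is exactly $A^\vee\cong A$ via the symmetrising form; chasing the identifications, the condition that $\pi^{-r}\tr_V|_A$ maps $A$ onto $\CO$ and induces an isomorphism $A\xrightarrow{\sim}A^\vee$ translates into $\pi^r\cdot(\text{dual lattice of }A)=A$ inside $\End_K(K\tenO V)$, i.e. $A$ has "trace discriminant exponent" $r$ in a precise sense. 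A cleaner route, which I expect to be the cleanest to write up: show directly that $\pi^{-r}\tr_V|_A$ is a symmetrising form $\iff$ the natural map $A\to A^\vee$, $a\mapsto(b\mapsto\pi^{-r}\tr_V(ab))$, is bijective; surjectivity of this map, combined with surjectivity of $\tr_V:\End_\CO(V)\to\CO$ onto $\pi^{r}\CO$-multiples, is equivalent to $\pi^r\End_\CO(V)\subseteq A$ (one inclusion from $A\subseteq\End_\CO(V)$ and the trace pairing; the reverse by minimality of $r$). The main obstacle is this last matching of $r$ with the containment exponent: it requires carefully using unimodularity of the trace form on $\End_\CO(V)$ to transfer the statement "$s$ induces $A\cong A^\vee$" into a statement about lattice containments, and then checking both the containment $\pi^r\End_\CO(V)\subseteq A$ and its minimality. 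Everything else ($Z(A)\cong\CO$, existence of $r\geq 0$) is comparatively formal.
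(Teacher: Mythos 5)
Your plan is correct and, at the only nontrivial point, genuinely different from the paper's argument. The opening steps agree with the paper: both get $Z(A)\cong\CO$ from $K\tenO A\cong\End_K(K\tenO V)$, and both identify any symmetrising form of $A$, up to a unit of $Z(A)\cong\CO$, with $\pi^{-r}\tr_V$ where $\tr_V(A)=\pi^r\CO$, whence $r\geq 0$. The divergence is in showing that $r$ equals the minimal exponent $s$ with $\pi^s\End_\CO(V)\subseteq A$. The paper proves $s\geq r$ by applying $\pi^{-r}\tr_V$ to $\pi^s e$ for a primitive idempotent $e$ of $\End_\CO(V)$ (which has $\tr_V(e)=1$), and proves $s\leq r$ by an explicit element argument: choose $c\in A\cap\pi^s\End_\CO(V)$ with $\pi^{-1}c\notin A$, so that $\CO c$ is pure in $A$ and extends to an $\CO$-basis, and evaluate the dual basis with respect to $\pi^{-r}\tr_V$ to get $\CO=\pi^{-r}\tr_V(cA)\subseteq\pi^{s-r}\CO$. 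Your lattice-duality replacement for this second inequality does work and is arguably slicker: unimodularity of the trace pairing on $\End_\CO(V)$ gives $\End_\CO(V)^*=\End_\CO(V)$, the trace pairing identifies $A^\vee=\Hom_\CO(A,\CO)$ with $A^*=\{x\in\End_K(K\tenO V)\colon \tr_V(xA)\subseteq\CO\}$, the symmetrising property of $\pi^{-r}\tr_V$ says exactly $A^*=\pi^{-r}A$, and dualising $A\subseteq\End_\CO(V)$ yields $\End_\CO(V)\subseteq\pi^{-r}A$, i.e. $\pi^r\End_\CO(V)\subseteq A$. Two repairs are needed when you write this up. First, your claimed \emph{equivalence} of surjectivity of $a\mapsto\pi^{-r}\tr_V(a\,\cdot)$ with the containment $\pi^r\End_\CO(V)\subseteq A$ is false: for $n=p$ odd and $A=\CO\,\Id_V+\pi\End_\CO(V)$ one has $\tr_V(A)=\pi\CO$ and $\pi\End_\CO(V)\subseteq A$, yet $\pi^{-1}\tr_V$ is not symmetrising, since $\pi^{-1}(e_{11}-e_{22})$ lies in $A^*$ but not in $\pi^{-1}A$ (here the $e_{ij}$ are matrix units relative to a basis of $V$); only the implication you actually use (symmetrising $\Rightarrow$ containment) holds. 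Second, the minimality of the containment exponent does not follow from a vague ``minimality of $r$''; finish it as the paper does: if $\pi^m\End_\CO(V)\subseteq A$, then $\pi^m=\tr_V(\pi^m e)\in\tr_V(A)=\pi^r\CO$ for an idempotent $e$ with $\tr_V(e)=1$, hence $m\geq r$. With these adjustments your approach gives a complete proof; what it buys over the paper's is a conceptual one-line derivation of $\pi^r\End_\CO(V)\subseteq A$ from unimodularity of the trace form, at the cost of setting up the dual-lattice identification, whereas the paper's argument stays entirely elementwise inside $A$.
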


\begin{proof}
Since the $\CO$-rank of $A$ is $n^2$, we have $K\tenO A\cong$
$\End_K(K\tenO V)$, whence $Z(A)\cong$ $\CO$. Any symmetrising
form on $\End_K(K\tenO V)$ is a nonzero linear multiple of
the trace map $\tr_{K\tenO V}$, and hence a symmetrising form on
$A$ is of the form $\pi^{-r}\tr_V$ for some integer $r$ such that
$\pi^{-r}\tr_V(A)=$ $\CO$. Since $\tr_V(A)\subseteq$ $\CO$, this forces 
$r\geq$ $0$. Let $s$ be the smallest nonnegative integer satisfying 
$\pi^s\End_\CO(V)\subseteq$ $A$. 
Let $e$ be a primitive idempotent in $\End_\CO(V)$.
Then $\tr_V(e)=$ $1$. We have $\pi^se\in$ $A$, hence
$\pi^{-r}\tr_V(\pi^se)=$ $\pi^{s-r}\in$ $\CO$. This
implies that $s\geq$ $r$. Since $\pi^{s-1}\End_\CO(V)$ is
not contained in $A$, there is an element $c\in$ $A$
such that $\pi^{-1}c\not\in$ $A$ and $c\in $ $\pi^s\End_\CO(V)$.
Thus $\CO c$ is a pure $\CO$-submodule of $A$. Thus there is an
$\CO$-basis of $A$ containing $c$. By considering the dual $\CO$-basis
with respect to the symmetrising form $\pi^{-r}\tr_V$, it follows that
$\pi^{-r}\tr_V(cA)=$ $\CO$. Since $c\in$ $\pi^s\End_\CO(V)$,
we have $\tr_V(cA)\subseteq$ $\pi^s\CO$, hence
$\pi^{-r}\tr_V(cA)\subseteq$ $\pi^{s-r}\CO$. 
This yields the inequality $s\leq$ $r$, whence the equality
$s=$ $r$.
\end{proof}

\begin{Corollary} \label{EndOVidempotents}
Let $V$ be a free $\CO$-module of finite rank $n$, and let $A$ be
a proper symmetric subalgebra of $\End_\CO(V)$ of rank $n^2$. Then
for any idempotent $i\in$ $A$, the integer $\tr_V(i)$ is divisible
by $p$. In particular, $p$ divides $n$.
\end{Corollary}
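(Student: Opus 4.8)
The strategy is to combine the previous proposition's description of the symmetrising form of $A$ with the fact that trace computations over $K$ are insensitive to the order in which one composes maps. By \ref{EndOVsubalgebras}, since $A$ is a proper symmetric subalgebra of $\End_\CO(V)$ of rank $n^2$, there is a strictly positive integer $r$ (strictly positive precisely because $A$ is proper, so $\End_\CO(V)\neq A$ and hence $r\geq 1$) such that $\pi^{-r}\tr_V$ restricts to a symmetrising form on $A$, and $r$ is the least integer with $\pi^r\End_\CO(V)\subseteq A$. Fix an idempotent $i\in A$.

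First I would observe that $\tr_V(i)$ is a nonnegative integer: after extending scalars to $K$, the element $i$ is an idempotent in $\End_K(K\tenO V)$, so its trace is the $K$-dimension of its image, an integer between $0$ and $n$. It therefore suffices to show $\pi\mid \tr_V(i)$ in $\CO$, equivalently that $\tr_V(i)\in J(\CO)$; since $\tr_V(i)\in\Z$ and $J(\CO)\cap\Z=p\Z$, this gives $p\mid\tr_V(i)$. Taking $i=\Id_V$ (which lies in $A$ since $1_A=\Id_V$) then yields $p\mid n$ as the ``in particular'' clause.

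The key step is to exploit that $iA i$ — or more simply the element $i$ together with the symmetrising form — feeds into \ref{EndOVsubalgebras}. Concretely: because $\pi^{-r}\tr_V$ is a symmetrising form on $A$, for \emph{any} $a\in A$ we have $\pi^{-r}\tr_V(ia)\in\CO$, i.e. $\tr_V(iA)\subseteq\pi^r\CO$. On the other hand, $i\in A$ and $iAi$ contains $i=i\cdot 1_A\cdot i$, and since the corner algebra $iAi$ is again symmetric with respect to the restricted form, the dual-basis argument (exactly as in the proof of \ref{EndOVsubalgebras}, applied to a pure submodule $\CO i\subseteq A$: either $i$ generates a pure $\CO$-line, extend to an $\CO$-basis, and pass to the dual basis) shows $\pi^{-r}\tr_V(iA)=\CO$ \emph{unless} $i$ fails to be part of a pure basis — but $i$ is always a pure basis element of $A$ since it is an idempotent (its image in $k\tenO A$ is nonzero, being a nonzero idempotent, so $\CO i$ is $\CO$-pure). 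Hence $\tr_V(iA)=\pi^r\CO$ exactly, and in particular $\tr_V(i)=\tr_V(i\cdot 1_A)\in\pi^r\CO\subseteq\pi\CO\subseteq J(\CO)$ because $r\geq 1$. This is the whole content.

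\textbf{Main obstacle.} The one point that needs care is the claim ``$\CO i$ is $\CO$-pure in $A$ and hence extends to an $\CO$-basis of $A$''. For a nonzero idempotent $i$, its reduction $\bar i\in k\tenO A$ is a nonzero idempotent, hence nonzero, and $k\tenO(\CO i)\to k\tenO A$ is injective, so purity follows from \ref{OpureLemma}(iv); alternatively, $\tr_V(i)=1$ for a \emph{primitive} idempotent of $\End_\CO(V)$, but here $i$ need only be an idempotent of $A$, so one cannot directly invoke that. The cleanest route is to note that $\tr_V(i)\in\Z_{\geq 0}$ and that if $\tr_V(i)=0$ then $i=0$ (idempotents with vanishing trace in characteristic zero are zero), so we may assume $i\neq 0$, giving purity of $\CO i$, and then the dual-basis argument of \ref{EndOVsubalgebras} applies verbatim with $c$ replaced by $i$ and ``$c\in\pi^s\End_\CO(V)$'' dropped, yielding $\pi^{-r}\tr_V(iA)=\CO$, i.e. $\tr_V(iA)=\pi^r\CO$. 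Everything else is immediate.
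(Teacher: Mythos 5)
Your proof is correct and is essentially the paper's argument: by Proposition \ref{EndOVsubalgebras}, since $A$ is proper the integer $r$ is positive and $\pi^{-r}\tr_V$ restricts to a symmetrising form on $A$, so $\pi^{-r}\tr_V(i)\in\CO$ forces $p\mid\tr_V(i)$ for the integer $\tr_V(i)$, and taking $i=\Id_V$ gives $p\mid n$. The middle detour (the corner algebra $iAi$, purity of $\CO i$, the dual-basis argument and the exact equality $\tr_V(iA)=\pi^r\CO$) is superfluous, since the only fact you actually use is the containment $\tr_V(i)\in\pi^r\CO$, which you had already obtained from $\pi^{-r}\tr_V(A)\subseteq\CO$.
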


\begin{proof}
If $A$ is a proper symmetric subalgebra of $\End_\CO(V)$, then
the smallest nonnegative integer $r$ satisfying 
$\pi^r\End_\CO(V)\subseteq$ $A$ is positive. It follows from 
\ref{EndOVsubalgebras} that $A$ has a symmetrising form which is the 
restriction to $A$ of $\pi^{-r}\tr_V$ for some positive integer $r$.
In particular, if $i$ is an idempotent in $A$, then $\pi^{-r}\tr_V(i)$
is an element in $\CO$. Since $\tr_V(i)$ is an integer and $r>0$, this
implies that $p$ divides $\tr_V(i)$, Applied to $i=$ $1_A=$ $\Id_V$
yields that $p$ divides $n=$ $\tr_V(\Id_V)$.
\end{proof}

\begin{Corollary} \label{EndOVpprime}
Let $V$ be a free $\CO$-module of finite rank $n$. Suppose that
$(n,p)=$ $1$. Then $\End_\CO(V)$ has no proper symmetric subalgebra
of $\CO$-rank $n^2$.
\end{Corollary}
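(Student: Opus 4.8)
The plan is to obtain this as an immediate consequence of Corollary \ref{EndOVidempotents}. Suppose, for contradiction, that $A$ is a proper symmetric subalgebra of $\End_\CO(V)$ of $\CO$-rank $n^2$. Applying the final assertion of \ref{EndOVidempotents} to the idempotent $1_A=\Id_V$ shows that $p$ divides $\tr_V(\Id_V)=n$. This contradicts the hypothesis $(n,p)=1$, so no such subalgebra can exist.

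There is essentially no obstacle here: the substantive work has already been done in the two preceding results, and the present statement is just the contrapositive packaging. The crucial mechanism, set up in \ref{EndOVsubalgebras}, is that for a subalgebra $A$ of full $\CO$-rank $n^2$ one has $K\tenO A\cong\End_K(K\tenO V)$, so $Z(A)\cong\CO$ and every symmetrising form on $A$ is the restriction of $\pi^{-r}\tr_V$ for a unique integer $r$, necessarily $r\geq 0$; moreover $r$ is the least nonnegative integer with $\pi^r\End_\CO(V)\subseteq A$, so $A$ is proper precisely when $r>0$. For any idempotent $i\in A$ the rational integer $\tr_V(i)$ must then satisfy $\pi^{-r}\tr_V(i)\in\CO$, which forces $p\mid\tr_V(i)$ as soon as $r>0$; specialising to $i=\Id_V$ gives $p\mid n$, which is all that is needed. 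If one preferred, this short chain could be inlined in place of invoking \ref{EndOVidempotents} as a black box, but there is no gain in doing so.
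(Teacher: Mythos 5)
Your proposal is correct and follows exactly the paper's route: the paper also deduces the statement immediately from Corollary \ref{EndOVidempotents}, whose final assertion (applied to the idempotent $\Id_V$) gives $p\mid n$, contradicting $(n,p)=1$. The extra unpacking of \ref{EndOVsubalgebras} you sketch is accurate but, as you note, not needed.
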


\begin{proof} This is clear by \ref{EndOVidempotents}.
\end{proof}

\begin{Corollary} \label{EndOVp}
Let $V$ be a free $\CO$-module of finite rank $p$. 
Every proper symmetric subalgebra of rank $p^2$ of
$\End_\CO(V)$ is local.
\end{Corollary}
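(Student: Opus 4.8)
The plan is to use Corollary \ref{EndOVidempotents} as a rigidity tool on idempotents. Let $A$ be a proper symmetric subalgebra of $\End_\CO(V)$ of $\CO$-rank $p^2$, where $V$ is $\CO$-free of rank $p$. To show $A$ is local, it suffices to show that $A$ has no nontrivial idempotent, i.e.\ that $0$ and $1_A$ are the only idempotents of $A$; since $A$ is finitely generated as an $\CO$-module and hence semiperfect, this is equivalent to $A/J(A)$ having no nontrivial idempotent, which together with the fact that $A/J(A)$ is a finite-dimensional algebra over $k$ forces $A/J(A)$ to be a division ring, hence (as $k$ is a splitting field, by the blanket hypothesis) $A/J(A) \cong k$, so $A$ is local.

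The key step is the following. Suppose $e \in A$ is an idempotent with $e \neq 0$ and $e \neq 1_A$. Since $A \subseteq \End_\CO(V)$, the element $e$ is an idempotent endomorphism of $V$, so $V = eV \oplus (1-e)V$ as $\CO$-modules, and $\tr_V(e) = \rank_\CO(eV)$. As $e \neq 0$, we have $\rank_\CO(eV) \geq 1$; as $1_A = \Id_V$ (the identity of $A$ is the identity of $\End_\CO(V)$, because $A$ has full rank $p^2$ and $K\tenO A \cong \End_K(K\tenO V)$ forces $1_A$ to act invertibly, hence as $\Id$, on $K\tenO V$), and $e \neq 1_A$, we have $eV \neq V$, so $\rank_\CO(eV) \leq p-1$. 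Thus $1 \leq \tr_V(e) \leq p-1$, so $\tr_V(e)$ is not divisible by $p$. But $A$ is a \emph{proper} symmetric subalgebra of $\End_\CO(V)$, so Corollary \ref{EndOVidempotents} applies and says $p \mid \tr_V(e)$ --- a contradiction. Hence $A$ has no nontrivial idempotent.

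The main obstacle --- really the only point needing care --- is to confirm that $1_A = \Id_V$, so that the trace computation $1 \le \tr_V(e) \le p-1$ for a proper nontrivial idempotent is valid; this is where the full-rank hypothesis $\rank_\CO A = p^2$ is used (via $K \tenO A \cong \End_K(K \tenO V)$, exactly as in the proof of \ref{EndOVsubalgebras}). After that, the passage from ``no nontrivial idempotent'' to ``local'' is standard: a finitely generated $\CO$-algebra is semiperfect, idempotents lift modulo $J(A)$, so the absence of nontrivial idempotents in $A$ gives the absence of nontrivial idempotents in the Artinian $k$-algebra $A/J(A)$, forcing $A/J(A)$ to be a division algebra, hence $k$ by the splitting hypothesis, and therefore $A$ is local.
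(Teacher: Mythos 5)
Your proof is correct and follows essentially the same route as the paper: apply Corollary \ref{EndOVidempotents} to see that any nonzero idempotent $e\in A$ has $\tr_V(e)=\rank_\CO(eV)$ divisible by $p$, hence equal to $p$, so $e=\Id_V=1_A$, and conclude that $A$ is local; the paper states this in three lines, while you spell out the (standard) passage from ``no nontrivial idempotents'' to locality. One tiny caveat: your final step $A/J(A)\cong k$ is not actually given by the paper's blanket splitting hypothesis (which concerns group algebras, not an arbitrary subalgebra $A$), but it is also not needed, since $A/J(A)$ being a division ring already makes $A$ local.
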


\begin{proof}
If $i$ is an idempotent in a proper symmetric subalgebra $A$
of $\End_\CO(V)$, then $p$ divides $\tr_V(i)\leq$ $\tr_V(\Id_V)=$ $p$.
Thus $i=$ $\Id_V$. The result follows.
\end{proof}

In order to prove a slightly more general version of 
Theorem \ref{decnumberp}, we use the following notation. Let
$A$ be an $\CO$-algebra which is finitely generated free as
an $\CO$-module, such that $K\tenO A$ is split semisimple and
$k\tenO A$ is split. Let $X$ be a simple $K\tenO A$-module.
Then there is an $\CO$-free $A$-module $V$ such that $K\tenO V\cong$
$X$. It is well-known that for any simple $k\tenO A$-module $S$
the number $d^X_S$ of composition factors isomorphic to $S$ in
a composition series of $k\tenO V$ does not depend on the choice
of $V$. This follows, for instance, from the fact that 
$d^X_S=$ $\dim_K(iX)$, where $i$ is a primitive idempotent in $A$
such that $Ai/J(A)i\cong$ $S$. We denote by $e(X)$ the central primitive
idempotent in $K\tenO A$ which acts as identity on $X$.
We say that $X$ lifts a simple
$k\tenO A$-module $S$ if $k\tenO V\cong$ $S$, or equivalently, if
$d^X_S=$ $1$ and $d^X_{S'}=$ $0$ for any simple $k\tenO A$-module
$S'$ not isomorphic to $S$. Theorem \ref{decnumberp} is the
special case $A=$ $\OG$ of the following result.

\begin{Theorem} \label{Adecnumberp}
Let $A$ be an $\CO$-algebra which is finitely generated free as
an $\CO$-module, such that $K\tenO A$ is split semisimple and
$k\tenO A$ is split. Let $X$ be a simple $K\tenO A$-module.
Suppose that the $\CO$-algebra $Ae(X)$ is symmetric. Then
either $X$ lifts a simple $k\tenO A$-module, or $d^X_S$ is
divisible by $p$ for any simple $k\tenO A$-module $S$.
\end{Theorem}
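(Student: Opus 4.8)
The plan is to reduce Theorem~\ref{Adecnumberp} to the matrix-algebra results of the previous section via the structural map. Let $V$ be an $\CO$-free $A$-module with $K\tenO V\cong X$, and let $\alpha : A\to\End_\CO(V)$ be the structural homomorphism. Since $X$ is absolutely simple, $(K\tenO A)e(X)\cong\End_K(X)$ is a matrix algebra, and by the reasoning in \ref{AEndRemark} and \ref{Aechisymm} the kernel of $\alpha$ is precisely the $\CO$-pure ideal of $A$ complementary to $\{X\}$, so $\Im(\alpha)\cong Ae(X)$, an $\CO$-algebra of rank $\dim_K(X)^2$ inside $\End_\CO(V)$. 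Thus $Ae(X)$ is, up to isomorphism, a symmetric subalgebra of $\End_\CO(V)$ of full rank $n^2$ where $n=\dim_K(X)=\rk_\CO(V)$.

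Now I would split into two cases according to whether this subalgebra is proper. If $\Im(\alpha)=\End_\CO(V)$, then the structural map is surjective, so by Nakayama's Lemma (exactly as in the proof of \ref{OGchimatrix}) $k\tenO V$ is a simple $k\tenO A$-module, i.e.\ $X$ lifts a simple $k\tenO A$-module, which is the first alternative. If instead $\Im(\alpha)$ is a proper symmetric subalgebra of $\End_\CO(V)$ of rank $n^2$, then I want to apply \ref{EndOVidempotents}: every idempotent $i\in\Im(\alpha)$ has $\tr_V(i)$ divisible by $p$. The point is to identify $\tr_V(i)$ with a decomposition number. Fix a simple $k\tenO A$-module $S$ and a primitive idempotent $j\in A$ with $Aj/J(A)j\cong S$; then $d^X_S=\dim_K(jX)$. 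Under $\alpha$, the image $i=\alpha(j)$ is an idempotent in $\Im(\alpha)$, and one computes $\tr_V(i)=\tr_X(jX\text{-action})=\dim_K(jX)=d^X_S$, using that $j$ acts on $X=K\tenO V$ as a projection onto $jX$ whose trace is the rank of its image. Hence $d^X_S$ is divisible by $p$ for every simple $k\tenO A$-module $S$, which is the second alternative.

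The main obstacle I anticipate is the bookkeeping needed to justify that $\tr_V(\alpha(j))$ really equals $d^X_S$, and in particular that $\alpha(j)$ is (conjugate in $\End_K(X)$ to) an honest projection so its trace is the $K$-dimension of its image; this requires knowing $jX$ is the image of the idempotent $\alpha(j)$ acting on $X$, together with the standard identity $d^X_S=\dim_K(jX)$ recalled in the paragraph preceding the theorem. One must also be slightly careful that $\alpha(j)$ is a nonzero idempotent exactly when $S$ occurs in $k\tenO A$ (i.e.\ when $jX\neq 0$); if $jX=0$ then $d^X_S=0$, which is trivially divisible by $p$, so that degenerate case is harmless. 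A secondary point is to confirm that \ref{EndOVidempotents} applies verbatim: its hypothesis is a proper symmetric subalgebra of $\End_\CO(V)$ of rank $n^2$, which is exactly what the case distinction delivers, so no extra work is needed there. Once these identifications are in place the theorem follows immediately, and Theorem~\ref{decnumberp} is the special case $A=\OG$, $X$ the $KG$-module affording $\chi$, in which $d^X_S=d^\chi_\varphi$ for $\varphi$ the Brauer character of $S$.
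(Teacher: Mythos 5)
Your proposal is correct and follows essentially the same route as the paper: pass to the structural map $A\to\End_\CO(V)$, identify its image with $Ae(X)$, apply Corollary~\ref{EndOVidempotents} to the proper symmetric subalgebra case, and use $\tr_V(i)=\rank_\CO(iV)=\dim_K(iX)=d^X_S$ for primitive idempotents; the only difference is cosmetic (you split on whether the image is proper, while the paper assumes $X$ does not lift a simple module and deduces properness).
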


\begin{proof}
Let $V$ be an $\CO$-free $A$-module such that $K\tenO V\cong$ $X$.
Suppose that $X$ does not lift a simple $k\tenO A$-module; that is,
$k\tenO V$ is not simple. Thus the image, denoted by $E$, of the 
structural map $A\to$ $\End_\CO(V)$ is a proper subalgebra of 
$\End_\CO(V)$. Note that $E\cong$ $A e(X)$. Moreover, for any 
idempotent $i\in$ $A$ we have $\tr_V(i)=$ $\rank_\CO(iV)=$ $\dim_K(iX)$.
As $A e(X)$ is assumed to be symmetric, it follows from 
\ref{EndOVidempotents} that for any idempotent $i\in$ $A$, the 
integer $\dim_K(iX)$ is divisible by $p$. Applied to primitive
idempotents, this shows that $d^X_S$ is divisible by $p$ for
any simple $k\tenO A$-module $S$.
\end{proof}

%%%%%%%%%%%%%%%%%%%%%%%%%%%%%%%%%%%%%%%%%%%%%%%%
\section{Proofs of Propositions \ref{IOGpure} and \ref{Bechipure}}
\label{proofSection}

If $\chi : G\to$ $\CO^\times$ is a linear character, then
the corresponding $\CO$-pure ideal $I_\chi=$ $KG e(\chi)\cap\OG$ has
$\CO$-rank $1$ and is equal to $\CO(\sum_{x\in G}\chi({x^{-1}})x)$.
There is a unique $\CO$-algebra automorphism of $\OG$ sending $x\in$ $G$
to $\chi(x)x$. This automorphism sends $\sum_{x\in G}\chi(x^{-1})x$
to $\sum_{x\in G} x$. Thus the linear characters of $\OG$ are permuted 
transitively by the group of $\CO$-algebra automorphisms of $\OG$,
and therefore, in order to address the question whether $\OG/I_\chi$ is
symmetric, it suffices to consider the case where $\chi=$ $1$ is
the trivial character, in which case the corresponding pure ideal
is $I_1=$ $\OG(\sum_{x\in G} x)=$  $\CO (\sum_{x\in G}x)$. 
The annihilator of $I_1$ in $\OG$ is the augmentation ideal
$I(\OG)$. Combining these observations with Proposition
\ref{symquotients} and some block theory yields 
a proof of \ref{IOGpure}, which we restate in a slightly
more precise way.

\begin{Proposition} \label{IOGpurebis}
Let $G$ be a finite group. The following are equivalent.

\smallskip\noindent (i)
The $\CO$-algebra $\OG/\CO(\sum_{x\in G}x)$ is symmetric.

\smallskip\noindent (ii)
There exists an element $z\in$ $Z(\OG)$ such that $I(\OG)=$ $\OG z$.

\smallskip\noindent (iii)
The group $G$ is $p$-nilpotent and has a cyclic Sylow-$p$-subgroup.
\end{Proposition}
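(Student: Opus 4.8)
The plan is to establish (i)$\Leftrightarrow$(ii) directly, then (iii)$\Rightarrow$(ii), and finally (ii)$\Rightarrow$(iii), which is the substance. For (i)$\Leftrightarrow$(ii): writing $\hat G=\sum_{x\in G}x$ and letting $\epsilon\colon\OG\to\CO$ be the augmentation, one has $a\hat G=\epsilon(a)\hat G$ for all $a\in\OG$, so $\ann_\OG(\CO\hat G)=\ker(\epsilon)=I(\OG)$; since $K\hat G\cap\OG=\CO\hat G$, the ideal $\CO\hat G$ is $\CO$-pure by \ref{OpureLemma}, and \ref{symquotients} then says $\OG/\CO\hat G$ is symmetric if and only if $I(\OG)=\ann(\CO\hat G)$ equals $\OG z$ for some $z\in Z(\OG)$, which is (ii). For (iii)$\Rightarrow$(ii) we may assume $p\mid|G|$ (otherwise $\OG$ is semisimple, $I(\OG)=\OG(1-e)$ for the idempotent $e$ of the trivial module, and (ii), (iii) hold trivially). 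Put $N=O_{p'}(G)$; then $e_N=\tfrac1{|N|}\hat N\in Z(\OG)$ is idempotent, $\OG(1-e_N)=\ker(\OG\to\CO[G/N])$, and $\OG e_N\cong\CO[G/N]$ compatibly with augmentations, so $I(\OG)=\OG(1-e_N)\oplus I(\OG e_N)$. If $G$ is $p$-nilpotent with cyclic Sylow $p$-subgroup $P\cong C_{p^n}$, then $G/N\cong P$, hence $\OG e_N\cong\CO C_{p^n}$ is commutative and $I(\OG e_N)=(\OG e_N)z_1$ is principal with $z_1$ central; then $z=z_1+(1-e_N)\in Z(\OG)$ satisfies $\OG z=I(\OG)$, giving (ii).

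For (ii)$\Rightarrow$(iii), assume (ii) with $p\mid|G|$; the idea is to translate (ii) into a statement about the Heller translate of the trivial module. Since $I(\OG)$ is $\CO$-pure, reducing mod $p$ gives $I(kG)=kG\bar z$ with $\bar z=1\ten z\in Z(kG)$. As $\bar z$ is central, $\ann_\ell(\bar z)=\ann(kG\bar z)=\ann(I(kG))$, and because $kG$ is symmetric with $\ann(k\hat G)=I(kG)$, non-degeneracy of the symmetrising form gives $\ann(I(kG))=k\hat G$. Moreover $J(kG)\hat G=0$, so $k\hat G\subseteq\soc(kG)$, and since $\soc(P_0)\cong k$ for the projective cover $P_0$ of the trivial module ($kG$ being symmetric), $k\hat G$ is exactly the unique copy $\soc(P_0)$ of the trivial module in $\soc(kG)$. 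Writing $kG=P_0\oplus W$ as left modules, the augmentation exhibits $I(kG)=\rad(P_0)\oplus W=\Omega(k)\oplus W$, whereas $I(kG)\cong kG/\ann_\ell(\bar z)=kG/k\hat G=(P_0/\soc P_0)\oplus W=\Omega^{-1}(k)\oplus W$, using that $P_0$ is also the injective hull of $k$. Cancelling $W$ by Krull--Schmidt (neither $\Omega^{\pm1}(k)$ has a projective summand) yields $\Omega(k)\cong\Omega^{-1}(k)$, hence $\Omega^2_{kG}(k)\cong k$.

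It then remains to deduce (iii) from $\Omega^2_{kG}(k)\cong k$. Restricting to a Sylow $p$-subgroup $P$ (the Heller operator commutes with $\Res^G_P$ up to projective summands since $kG$ is $kP$-free, and $k$ is indecomposable non-projective) gives $\Omega^2_{kP}(k)\cong k$; comparing $k$-dimensions, $\dim_k\Omega^2_{kP}(k)=(d(P)-1)|P|+1$ with $d(P)=\dim_{\F_p}P/\Phi(P)$, forcing $d(P)=1$, i.e. $P$ is cyclic. Now $\dim_k H^2(P;k)=1$, and $\Omega^2_{kG}(k)\cong k$ forces $\dim_k H^2(G;k)=1$; since $\Res^G_P\colon H^2(G;k)\to H^2(P;k)$ is injective ($[G:P]$ being invertible in $k$), it is bijective, so the generator of $H^2(P;k)$ is stable, hence $N_G(P)$-invariant. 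As $N_G(P)/C_G(P)$ acts on $H^2(C_{p^n};k)$ through its faithful image in $\Aut(C_p)\subseteq\Fp^\times$, this forces $N_G(P)=C_G(P)$, whence $G$ is $p$-nilpotent by Burnside's normal $p$-complement theorem; together with $P$ cyclic this is (iii). Alternatively, $\Omega^2_{kG}(k)\cong k$ says $\hat H^*(G;k)$ has period dividing $2$, which by the classical period formula for periodic cohomology amounts to $P$ being cyclic with $N_G(P)=C_G(P)$; or one invokes Dade's structure of blocks with cyclic defect, where $\Omega^2(k)\cong k$ forces the inertial index of the principal block to be $1$, so the principal block is nilpotent and $G$ is $p$-nilpotent.

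I expect the main obstacle to be precisely the last step: everything up to $\Omega^2_{kG}(k)\cong k$ is formal (symmetry of $kG$ together with Krull--Schmidt), and the cyclicity of $P$ is a dimension count, but deducing $p$-nilpotency from $\Omega^2(k)\cong k$ requires genuine input beyond elementary manipulation — the behaviour of restriction on group cohomology and Burnside's transfer theorem, or the periodicity theory of group cohomology, or Dade's theory of blocks with cyclic defect groups. This is the ``some block theory'' alluded to in the introduction.
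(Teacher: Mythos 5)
Your argument is correct, and for the substantive implication it takes a genuinely different route from the paper. Your (i)$\Leftrightarrow$(ii) and (iii)$\Rightarrow$(ii) coincide with the paper's in substance: for a $p$-nilpotent group with $N=O_{p'}(G)$, your idempotent $\frac{1}{|N|}\sum_{n\in N}n$ is exactly the principal block idempotent $b$, so your $z=z_1+(1-e_N)$ is the paper's $z=(y-1)b+\sum_{b'}b'$ in other notation. The divergence is in (ii)$\Rightarrow$(iii). The paper stays inside block theory: since $Z(\OGb)$ is local, $I(\OG)b=\OG zb$ lies in $J(\OGb)$, and $\OGb/I(\OG)b\cong\CO$ then forces $\OGb$ to be a local algebra; $p$-nilpotency is quoted from the classical characterisation of $p$-nilpotent groups by their principal block being local (Nagao--Tsushima), and cyclicity of the Sylow subgroup from Nakayama's theorem that a principal radical makes $kG\bar b$ uniserial. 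You instead use the symmetry of $kG$ (double annihilator plus $\ann(k\hat G)=I(kG)$, $k\hat G=\soc(P_0)$) to turn $I(kG)=kG\bar z$ into $\Omega(k)\oplus W\cong\Omega^{-1}(k)\oplus W$, hence $\Omega^2_{kG}(k)\cong k$; cyclicity of $P$ then follows from the dimension count $(d(P)-1)|P|+1=1$ after restriction to $P$, and $p$-nilpotency from $\dim_k H^2(G;k)=1$, injectivity (hence here bijectivity) of restriction to $H^2(P;k)$, the fact that $N_G(P)/C_G(P)$ acts on $H^2(P;k)$ through reduction to $\Aut(C_p)$ (trivially so when $p=2$, where $N_G(P)/C_G(P)$ is a $p'$-subgroup of a $2$-group), and Burnside's normal $p$-complement theorem. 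Both routes require genuine classical input at exactly this point; the paper's is shorter and leans on two citations used as black boxes, while yours makes the module-theoretic content explicit (the statement that the trivial module is $\Omega$-periodic of period dividing $2$) and replaces the block-theoretic citations by Burnside transfer and elementary cohomology of cyclic groups, at the price of being longer and needing care with the action on $H^2$ and the stability of restricted classes, which you do handle correctly. The only cosmetic omission is that in (ii)$\Rightarrow$(iii) you should note that the case $p\nmid|G|$ makes (iii) vacuous, as you did for the converse.
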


\begin{proof}
The equivalence of (i) and (ii) is clear by \ref{symquotients}.
Suppose that (ii) holds. Let $b$ be the principal block idempotent
of $\OG$. Then $I(\OG)b=$ $\OG zb$ is a proper ideal in $\OGb$; in
particular, $zb$ is not invertible in $Z(\OGb)$. Since $Z(\OGb)$ is 
local, it follows that $zb$ is in the radical of $Z(\OGb)$, and
hence the ideal $\OG zb$ is  contained in $J(\OGb)$. Since  
$\OGb/I(\OG)b\cong$ $\OG/I(\OG)\cong$ $\CO$, this implies that $\OGb$  
is a local algebra, and that $kG\bar z\bar b=$ $J(kG\bar b)$, where 
$\bar z$, $\bar b$ are the canonical images of $z$, $b$ in $kG$,
respectively. It follows  that the finite group $G$ is $p$-nilpotent
(see  \cite[Chapter 5, Theorems 8.1 and 8.2]{NagTsu}). Since 
$J(kG \bar b)=$ $kG\bar z\bar b$ is a principal ideal, it follows from 
a result of Nakayama \cite{Nak40} that $kG\bar b$ is uniserial, and hence 
$P$ is cyclic. Thus (ii) implies (iii).
Conversely, if (iii) holds, then the principal block algebra $\OGb$
of $\OG$ is isomorphic to $\OP$, where $P$ is a Sylow-$p$-subgroup
of $G$, and if $y$ is a generator of $P$, then $I(\OP)=$ $\OP(y-1)$.
Note that $I(\OG)$ contains all nonprincipal block algebras of $\OG$.
Set $z=$ $(y-1)b+\sum_{b'} \ b'$, where in the sum $b'$ runs
over all nonprincipal block idempotents. By the above, this is an 
element in $Z(\OG)$ satisfying $I(\OG)=$ $\OG z$, completing the proof.
\end{proof}

\begin{proof}[{Proof of Corollary \ref{IZGpure}}]
Suppose that $\Z G/\Z(\sum_{x\in G}x)$ is symmetric. Tensoring by $\CO$
over $\Z$ implies that the $\CO$-algebra $\OG/\CO(\sum_{x\in G}x)$ is 
symmetric. Thus $G$ is $p$-nilpotent with a cyclic $p$-Sylow subgroup.
This holds for any prime $p$. It follows that $G$ is a direct product of
its Sylow subgroups all of which are cyclic, and hence $G$ is cyclic.
Suppose conversely that $G$ is cyclic. Let $y$ be a generator of $G$. 
Then the augmentation ideal of $\Z G$ is equal to $(y-1)\Z G$, and this 
is also equal to the annihilator of $\Z(\sum_{x\in G}x)$. It follows 
from \ref{symquotients}, applied with the Noetherian ring $\Z$
instead of $\CO$, that $\Z G/\Z(\sum_{x\in G}x)$ is symmetric. 
\end{proof}

\begin{proof}[Proof of Proposition \ref{Bechipure}]
Suppose that $B/I$ is symmetric. By \ref{symquotients} the annihilator
$J$ of $I$ is of the form $J=$ $Bz$ for some $z\in$ $Z(B)$.
Let $X$ be an $\CO$-free $B$-module with character $\chi$. By the
assumptions, $\bar X=$ $k\tenO X$ is a simple module over
$\bar B=$ $k\tenO B$.
By \ref{OGchimatrix}, and using that $k$ is large enough, the structural 
map $B\to$ $\End_\CO(X)$ is surjective, and the kernel of this map is $J$. 
Since $J=$ $Bz$ is a 
proper ideal in $B$ and $Z(B)$ is local, it follows that $z\in$ $J(B)$. 
Thus the image $\bar J$ of $J$ in $\bar B$ is equal to $J(\bar B)$, 
and $\bar B$ has a single isomorphism class of simple modules.
Since $J(\bar B)=$ $\bar B\bar z$, where $\bar z$ is the image of
$z$ in $Z(\bar B)$, it follows as before from \cite{Nak40} that $\bar B$ 
is uniserial. 
Thus $B$ has cyclic defect groups. A block with cyclic defect and
a single isomorphism class of simple modules is nilpotent, which
shows that (i) implies (ii). Conversely, if (ii) holds, then $B$ 
is Morita equivalent to $\OP$, where $P$ is a defect group of $B$
(and $P$ is cyclic by the assumptions). 
Since $\chi$ lifts an irreducible Brauer character it follows that 
under some Morita equivalence, $\chi$ corresponds to the trivial 
character of $\OP$, and hence (i) holds by \ref{IOGpure} applied to $P$. 
\end{proof}

%%%%%%%%%%%%%%%%%%%%%%%%%%%%%%%%%%%%%%%%%%%%%%%%%%%%%%%%%%%%%%%%%%%%%%%%%%%%
\section{An Example} \label{ExamplesSection}

As was pointed out after \ref{centraltype}, all irreducible characters 
of finite $p$-groups of order at most $p^3$ have the symmetric quotient 
property. The next example shows that for any odd prime $p$ there is
a finite $p$-group of order $p^{p+1}$ having at least one irreducible 
character which does not have the symmetric quotient property.

\begin{Example} \label{wreathExample}
Let $p$ be an odd prime. $G=$ $Q\wr R=$ $H\rtimes R$, where $Q$, $R$ are
cyclic of order $p$, and where $H$ is a direct product of $p$ copies
of $Q$ which are transitively permuted by $R$. 
Let $s$ be a generator of $Q$ and $\zeta$ be a primitive $p$-th 
root of unity. For $1 \leq$ $i\leq$ $p$ let $\psi_i : H\to$ $\CO^\times$
be the linear character sending $(s^{a_1},s^{a_2},..,s^{a_p})\in$ $H$
to $\zeta^{a_i}$; that is, the kernel of $\psi_i$ contains all but
the $i$-th copy of $Q$ in $H$, the $\psi_i$ are pairwise different,
and they are permuted transitively by the action of $G$. Set $\chi=$ 
$\Ind^G_H(\psi_1)$. Then $\chi\in$ $\Irr_K(G)$, and the $\CO$-algebra 
$\OG e(\chi)$ is not symmetric. 

\smallskip
The irreducibility of $\chi$ is a standard result.
In order to show the symmetry of $\OG e(\chi)$, observe first that 
$\Res^G_H(\chi)=$ $\sum_{i=1}^p \psi_i$ because the $\psi_i$ form a 
$G$-orbit in $\Irr_K(H)$. We have
$$e(\chi)= \sum_{i=1}^p\ e(\psi_i) = 
\frac{1}{|H|}\sum_{h\in H} (\sum_{i=1}^p \psi_i(h^{-1})) h\ .$$
The coefficients $\sum_{i=1}^p \psi_i(h^{-1})$ are divisible by
$1-\zeta$ because they are sums of $p$ (arbitrary) powers of 
$\zeta$. Moreover, for $h=$ $(s,1,..,1)\in$ $H$ one sees that $1-\zeta$
is the highest power of $1-\zeta$ dividing this coefficient.
Thus if $\OG e(\chi)$ were symmetric, then $\OG z$ would have to
pure in $\OG$, where $z =$ $\frac{|H|}{1-\zeta} e(\chi)$.
Since $z\in$ $\OH$, this is the case if and only if $\OH z$ is
pure in $\OH$. We will show that $\OH z$ is not pure in $\OH$.
If $u=$ $(s^{a_1},s^{a_2},..,s^{a_p})\in$ $H$, then
$$uz = \frac{|H|}{1-\zeta} \sum_{i=1}^p u e(\psi_i) =
\frac{|H|}{1-\zeta} \sum_{i=1}^p \zeta^{a_i} e(\psi_i)\ .$$
Applied to the identity element and
$v=$ $(1,1,..1,s,1,..1)$, with $s$ in the $i$-th component, 
and taking the difference yields $z-vz=$
$|H|e(\psi_i)\in$ $\OG z$. For any $u\in$ $H$, the above formula
yields $uz-z\in$ $\oplus_{i=1}^p\ \CO |H|e(\psi)$. Thus the set
$$\{ z, |H|e(\psi_i)\ (2\leq i\leq p)\}$$
is an $\CO$-basis of $\OH z$. Since $p$ is odd, this basis has
at least three elements. Suppose that $w=$ $\sum_{h\in H} \mu_h h$ 
is an element in $\OH z$. Write 
$w=$ $\alpha z+\sum_{i=2}^p \beta_i |H|e(\psi_i)$
with $\alpha$, $\beta_i\in$ $\CO$. Thus
$$\mu_h = \alpha \frac{\sum_{i=1}^p\ \psi(h^{-1})}{1-\zeta}\ +\
  \sum_{i=2}^p\ \beta_i\psi_i(h^{-1})\ .$$
Note that if $\sum_{i=2}^p\beta_i$ is divisible by $1-\zeta$ then
any sum of the form $\sum_{i=2}^p\ \beta_i\psi_i(h^{-1})$ is divisible
by $1-\zeta$ because any character value $\psi_i(h^{-1})$ is
a power of $\zeta$. This shows that if $1-\zeta$ divides
both $\alpha$ and the sum $\sum_{i=2}^p\beta_i$, then $1-\zeta$ divides
$\mu_h$ for all $h\in$ $H$. But since $p>2$ we may choose invertible
elements $\beta_i$ satisfying $\sum_{i=2}^p\beta_i=$ $0$. This shows
that even if all $\mu_h$ are divisible by $1-\zeta$, this does not
imply that $\alpha$ and all $\beta_i$ are divisible by $1-\zeta$,
hence $\OH z$ is not $\CO$-pure in $\OH$.
\end{Example}

\end{document}